\documentclass[11pt]{article}

\usepackage[]{float,latexsym,times}
\usepackage{amsfonts,amstext,amsmath,amssymb,amsthm}
\usepackage{caption2}

\usepackage{graphicx}

\long\def\symbolfootnote[#1]#2{\begingroup%
\def\thefootnote{\fnsymbol{footnote}}\footnote[#1]{#2}\endgroup}

\topmargin=-0.625in
\headsep=0.5in
\oddsidemargin=-0.1in
\evensidemargin=-0.1in
\textwidth=6.5in
\textheight=9in

\usepackage{titlesec} 


\titleformat{\section}{\large\bfseries}{\thesection.}{.5em}{}
\titlespacing*{\section}{0pt}{*3}{*2}
\titleformat{\subsection}{\normalfont\bfseries}{\thesubsection.}{.5em}{}
\titlespacing*{\subsection} {0pt}{*3}{*2}
\titleformat{\subsubsection}{\normalfont\bfseries}{\thesubsubsection.}{.5em}{}
\titlespacing*{\subsubsection} {0pt}{*3}{*2}


\theoremstyle{plain} 
\newtheorem{theorem}{Theorem}[section]
\newtheorem{lemma}{Lemma}[section]
\newtheorem{corollary}{Corollary}[section]

\theoremstyle{definition} 



\newcommand{\dint}{\mathrm{d}}

\numberwithin{equation}{section} 

\usepackage[authoryear]{natbib}


\begin{document}

\title{\textbf{\Large A Linear Programming Approach to Sequential Hypothesis Testing}}

\date{}

\maketitle


\author{
	\begin{center}
		\vskip -1cm
		\textbf{\large Michael Fau\ss{} and Abdelhak M. Zoubir} \\
		Signal Processing Group, Technische Universit\"at Darmstadt, \\
		Darmstadt, Germany
	\end{center}
}

\symbolfootnote[0]{\normalsize Address correspondence to Michael Fau\ss{},
	Signal Processing Group, Technische Universit\"at Darmstadt, Merckstra\ss{}e 25, 
	64283 Darmstadt, Germany; E-mail: michael.fauss@spg.tu-darmstadt.de
}

{\small \noindent\textbf{Abstract:}
	Under some mild Markov assumptions it is shown that the problem of designing optimal sequential tests for two simple hypotheses can be formulated
  as a linear program. This result is derived by investigating the La\-gran\-gian dual of the sequential testing problem, which is an
  unconstrained optimal stopping problem depending on two unknown Lagrangian multipliers. It is shown that the derivative of the optimal cost
  function, with respect to these multipliers, coincides with the error probabilities of the corresponding sequential test. This  property is used
  to formulate an optimization problem that is jointly linear in the cost function and the Lagrangian multipliers and can be solved for both with
  off-the-shelf algorithms. To illustrate the procedure, optimal sequential tests for Gaussian random sequences with different dependency structures
  are derived, including the Gaussian AR(1) process.
}
\\ \\
{\small \noindent\textbf{Keywords:} 
	Discrete-time Markov process; Linear programming; Optimal sequential test; Optimal stopping; Sequential hypothesis testing; Two simple hypotheses.
}
\\ \\
{\small \noindent\textbf{Subject Classifications:} 
	62L10; 62L15; 90C05.
}

\section{Introduction}

The treatment of sequential analysis, in general, and sequential hypothesis testing, in particular, can be roughly divided into two main strands. The
first one was started by \citet{Wald1947} in his pioneering work on sequential analysis. Without being aware of it, Wald used fundamental
properties of martingales to establish bounds on the error probabilities and the expected run-length of sequential tests between two simple
hypotheses. In the decades after Wald's initial publication, research on random processes made significant progress. Theories of martingales, renewal,
and L\'evy processes emerged and became powerful tools in sequential analysis \citep{Lai1977,Lai2009,Buonaguidi2013}. They allowed for elegant
derivations of asymptotically optimal procedures on the one hand, and bounds or approximations on non-asymptotic performance measures on the other.
Following Wald's footsteps, many of the theoretical findings obtained this way resulted in elementary guidelines for the design of sequential tests
that were readily applicable in fields as diverse as survival analysis \citep{Sellke1983}, radar sensing \citep{Marcus1962}, and image processing
\citep{Basseville1981}. For the design of strictly optimal procedures, however, a conceptually different approach to the problem proved to be more
successful.

The second strand of sequential analysis was started by \citet{Bellman1954} with his publications on dynamic programming. Coming from a background in
physics and computer science, Bellman sought to develop a theory ``to treat the mathematical problems arising from the study of various multi-stage
decision processes'', which can be found in ``virtually every phase of modern life, from the planning of industrial production lines to the scheduling
of patients at a medical clinic [\ldots]'' \citep{Bellman1954}. Sequential hypothesis testing, however, had not been added to this list before the
1960s, when \citet{Chow1963} embedded a general theory of optimal stopping in the dynamic programming framework. Treating a sequential test not as a
single threshold crossing problem, but as a sequence of individual decisions to either stop or continue the test, added significantly to the general
insight into sequential inference problems and made the theoretical derivation of strictly optimal methods possible \citep{Chow1971, Novikov2009}.

Naturally, both approaches have coalesced over the years and nowadays sequential testing is usually treated as an optimal stopping problem
that exploits certain stochastic properties of the underlying random process---see \citet{Buonaguidi2013} for a recent example. However, to the
present day, the use of dynamic programming methods to design sequential tests is uncommon in practice. Most often, optimal stopping theory is
rather used to prove the existence of an optimal rule or to derive its general form. Wald's sequential probability ratio test is a classic example.
While optimal stopping theory provides a seamless and elegant way to show its optimality in the i.i.d.\ case \citep{Shiryaev1978}, obtaining the exact
values of the thresholds is much harder a task and requires considerable computational effort.

The reason for this high computational cost is twofold: First, the Bellman equation of the optimal stopping problem itself needs to be solved.
Traditionally, and owing to its roots in dynamic programming, iterative backward recursion procedures \citep{Bellman1954} are used for this purpose.
Under certain conditions, more efficient methods are applicable \citep{Helmes2002}, including a number of linear programming (LP) techniques
\citep{Manne1960, Denardo1979, Roehl2001}. The second issue that arises when deriving optimal strategies is that the sequential testing problem can
not be tackled directly, but has to be reformulated in order to fit the optimal stopping framework. This involves the introduction of two cost
coefficients that determine the price for making an error of either type. The optimal stopping strategy depends on the choice of these coefficients.
The design of sequential tests in an optimal stopping framework, therefore, requires a second, outer optimization over the unknown cost coefficients.
These intricacies limit the design and application of strictly optimal decision strategies.

The aim of this paper is to overcome these issues, by unifying the stopping problem and the problem of choosing the right cost coefficients. The 
formulation of sequential testing as an optimal stopping problem is briefly addressed in Section \ref{sec:seq_det}, where also the Bellman equation
that characterizes its solution is stated. In Section \ref{sec:properties}, this solution is studied in detail and a connection between its derivative
with respect to the cost coefficients and the error probabilities of the sequential test is derived. Based on these results, the original sequential
testing problem is then addressed in Section \ref{sec:linear_program}, where it is shown, under certain Markov restrictions, that it can be formulated
as a jointly linear optimization over the cost coefficients and the optimal stopping strategy. This constitutes the main theoretical contribution of
the work. Since linear programming is rarely ever considered in the context of sequential inference, establishing this connection can be seen as
result in its own right. In addition, the main theorem is of high practical relevance since it allows a large class of sequential testing problems to
be treated within one consistent framework and solved with off-the-shelf linear programming algorithms. This procedure is demonstrated in Section
\ref{sec:examples}, where optimal sequential tests are designed for i.i.d.\ observations, an observable Markov chain and the Gaussian AR(1) process.

A note on notation: $\mathbb{R}_+$ denotes the positive reals and $\mathcal{B}_+$ the associated Borel $\sigma$-algebra. Random variables are denoted
by upper-case letters, their realizations by the corresponding lower-case letters. All (in)equalities between tuples have to be read element-wise.

\section{Sequential Tests for Two Simple Hypotheses}
\label{sec:seq_det}
The problem of sequentially testing between two simple hypotheses, under different restrictions and assumptions, has been treated extensively in the
literature; see, for example, \citet{Tartakovsky2014} and references therein. The purpose of this section is to give a summary of known results,
as well as to present them in a form that facilitates the derivations in the subsequent sections. It closely follows \citet{Novikov2009} in terms of
argumentation and notation.

Let $(X_n)_{n \geq 1}$, with $n \in \mathbb{N}$, be a sequence of random variables in a metric state space $(E_X,\mathcal{E}_X)$, defined on some
filtered probability space $\left(\Omega, \mathcal{F}, (\mathcal{F}_n)_{n\geq0}, P \right)$. The two simple hypotheses are given by
\begin{align*}
  \mathcal{H}_0: P & = P_0, \\
  \mathcal{H}_1: P & = P_1.
\end{align*}
Kolmogorov's consistency theorem \citep{Kallenberg1997} states that $P$ is uniquely defined by the marginal distributions of all subsequences $X_1,
\ldots, X_n$, $n \geq 1$. Let these distributions be denoted $F_0^n$ and $F_1^n$ under $\mathcal{H}_0$ and $\mathcal{H}_1$, respectively. The classic
sequential testing problem is to design a test that guarantees certain error probabilities under $P_0$ and $P_1$ and minimizes the run-length of the
test under some third measure $P$, corresponding to a sequence of marginal distributions $(F^n)_{n\geq1}$. In general, $P$ can be chosen arbitrarily;
common choices are $P= P_0$ or $P = P_1$, which correspond to run-lengths minimizations under either hypothesis.

\subsection{Assumptions}
The framework proposed in this paper covers processes $(X_n)_{n \geq 1}$ that satisfy the following three assumptions. A bullet $\bullet$ is used
to indicate that an assumption holds under $P$, $P_0$ and $P_1$.

\begin{enumerate}
  \item $(X_n)_{n\geq1}$ admits a time-homogeneous Markovian representation. This means that a sequence of sufficient statistics 
        $\left(\Theta_n\right)_{n\geq0}$ in a state space $(E_{\theta},\mathcal{E}_{\theta})$ exists such that
        \begin{equation*}
          P_{\bullet}(X_{n+1} \in B \,|\, \mathcal{F}_n) = P_{\bullet}(X_{n+1} \in B \,|\, \Theta_n) =: F_{\bullet,\Theta_n}(B)
        \end{equation*}
        for all $n \geq 0$ and all $B \in \mathcal{E}_X$. It is further assumed that a $P$-measurable function $\xi: E_X \times E_{\theta} \rightarrow
        E_{\theta}$ exists such that
        \begin{equation*}
          \Theta_n = \xi(X_n,\Theta_{n-1}) =: \xi_{\Theta_{n-1}}(X_n) \quad \text{with} \quad \Theta_0 = \theta_0.
        \end{equation*}
        In words, the distribution of $X_{n+1}$ is completely specified by $\theta_n$, which can in turn be calculated recursively from $\theta_{n-1}$
        and the observation $x_n$. The initial value $\theta_0$ is deterministic and given a priori. Random sequences admitting these properties cover
        a wide range of commonly used models, such as ARMA and ARCH models and general Markov chains. In order to avoid technical difficulties,
        $E_{\theta}$ is assumed to be Borelian.

  \item The density functions $f^n$,$f_0^n$, and $f_1^n$, corresponding to $F^n$,$F_0^n$, and $F_1^n$, exist with respect to some common product
        measure $\mu^n = \mu(x_1) \otimes \cdots \otimes \mu(x_n)$ and can, according to the first assumption, be written as
        \begin{equation*}
          f_{\bullet}^n(x_1, \ldots, x_n) = \prod_{k=1}^n f_{\bullet}(x_k|\theta_{k-1}) =: \prod_{k=1}^n f_{\bullet,\theta_{k-1}}(x_k).
        \end{equation*}
        for all $n \geq 1$.

  \item The Radon-Nikodym derivatives, or likelihood ratios,
        \begin{equation*}
          z_i^n \;:\; E_X^n \to \mathbb{R}_+ \quad \text{with} \quad z_i^n = \frac{\dint F_i^n}{\dint F^n} =
          \prod_{k=1}^n \frac{f_{i,\theta_{k-1}}(x_k)}{f_{\theta_{k-1}}(x_k)}, \quad i=0,1
        \end{equation*}
        are jointly continuous random variables. Note that this implies that $F_0^n$ and $F_1^n$ are dominated by $F^n$ for all $n\geq1$. The reasons
        for this particular choice of assumptions will become apparent in the course of the paper.
\end{enumerate}

\subsection{Constrained Problem Formulation}
Let the sequence $\psi = \left(\psi_n\right)_{n \geq 1}$, with $\psi_n = \psi_n(x_1,\ldots,x_n) \in \{0,1\}$, define the stopping rule of the
sequential test. Here $\psi_n = 1$ denotes the decision to stop at time instant $n$ and $\psi_n = 0$ denotes the decision to continue testing.
Analogously, let $\phi=\left(\phi_n\right)_{n \geq 1}$, with $\phi_n = \phi_n(x_1,\ldots,x_n) \in \{0,1\}$, be a sequence of decision rules, where
$\phi_n = j$ indicates a decision for hypothesis $j$, given that the test stops at time $n$. It is shown later that the restriction of continuously
distributed likelihood ratios avoids the need for randomized stopping and decision rules.

Let $\tau = \tau(\psi)$ be the stopping time that corresponds to the stopping rule $\psi$, i.e.,
\begin{align*}
  \tau = \min \{ n \geq 0 \;:\; \psi_n = 1 \}.
\end{align*}
The error probabilities of the first and second kind, $\alpha_0$ and $\alpha_1$, are given by
\begin{align*}
  \alpha_0(\psi, \phi) & = P_0[\phi_{\tau} = 1], \\ 
  \alpha_1(\psi, \phi) & = P_1[\phi_{\tau} = 0]. 
\end{align*}
The sequential testing problem is to solve
\begin{equation}
  \min_{\psi,\phi} \; E[\tau(\psi)] \quad \text{s.t.} \quad \alpha_i(\psi, \phi) \leq \gamma_i, \quad i=0,1,
  \label{eq:constr_min}
\end{equation}
where $\gamma = (\gamma_0,\gamma_1) \in (0,1)^2 := (0,1) \times (0,1)$ are bounds on the error probabilities, or target error probabilities, and the
expected value is taken with respect to $P$. The solution of \eqref{eq:constr_min} is denoted $(\psi^*_{\gamma},\phi^*_{\gamma})$.

\subsection{Unconstrained Problem Formulation}
The commonly used procedure to solve \eqref{eq:constr_min}, in an optimal stopping framework, is to reformulate the constrained problem as an
unconstrained cost minimization by showing that two constants $\lambda_0, \lambda_1 > 0$ exist such that the solution of
\begin{equation}
  \min_{\psi,\phi} \; E[\tau(\psi)] + \lambda_0 \alpha_0(\psi, \phi) + \lambda_1 \alpha_1(\psi, \phi),
  \label{eq:cost_min}
\end{equation}
in the following denoted $(\psi^*_{\lambda},\phi^*_{\lambda})$, coincides with the solution of \eqref{eq:constr_min}, i.e.,
\begin{equation*}
  \forall \gamma \in (0,1)^2 \; : \quad \exists \lambda \in \mathbb{R}_+^2 \; : \quad (\psi^*_{\gamma},\phi^*_{\gamma}) =
(\psi^*_{\lambda},\phi^*_{\lambda}).
\end{equation*}
This method has been used early on to prove optimality of the sequential probability ratio test for i.i.d.\ observations \citep{Wald1948}; a general
proof can be found in \citet{Novikov2009}. The cost coefficients $\lambda_0$ and $\lambda_1$ in \eqref{eq:cost_min} act as Lagrangian multipliers.
However, the question how to choose them in order to meet the constraints on the error probabilities in \eqref{eq:constr_min} has received little to
no attention in the literature. It is discussed in detail in the next section.

For now, $\lambda$ is assumed to be given and fixed. Following the usual line of arguments, the second part of the objective function in
\eqref{eq:cost_min} can be written as
\begin{align*}
  \lambda_0 \alpha_0(\psi, \phi) + \lambda_1 \alpha_1(\psi, \phi) & = \sum_{n \geq 1} \left( \lambda_0 E_0[\psi_n \boldsymbol{1}_{\{\phi_n = 1\}}]
    + \lambda_1 E_1[\psi_n \boldsymbol{1}_{\{\phi_n = 0\}}] \right) \\
  & = \sum_{n \geq 1} \int \psi_n ( \lambda_0 f_0^n \boldsymbol{1}_{\{\phi_n = 1\}} + \lambda_1 f_1^n \boldsymbol{1}_{\{\phi_n = 0\}}) \, \dint \mu^n.
\end{align*}
The cost minimizing decision rules $\phi_{\lambda}^*$ are hence given by
\begin{equation} 
  \phi^*_{\lambda,n} = \boldsymbol{1}_{\{\lambda_0 f_0^n \leq \lambda_1 f_1^n\}}, \quad n \geq 1,
  \label{eq:decision_rule}
\end{equation}
where
\begin{equation*}
  \{\lambda_0 f_0^n \leq \lambda_1 f_1^n\} := \{ (x_1,\ldots,x_n) \in E_X^n \,:\, \lambda_0 f_0^n(x_1,\ldots,x_n) \leq \lambda_1 f_1^n(x_1,\ldots,x_n)
    \}.
\end{equation*}
This shorthand notation is used for subsets of the state space throughout the paper. The choice in \eqref{eq:decision_rule} that the ambiguous
event $\{\lambda_0 f_0^n = \lambda_1 f_1^n\}$ leads to a decision for $\mathcal{H}_0$ is arbitrary since $P(\{\lambda_0 f_0^n = \lambda_1 f_1^n\}) =
0$ for all $n\geq1$ by Assumption 3.

The decision rule \eqref{eq:decision_rule} corresponds to a classic likelihood ratio test with threshold $\lambda_0 / \lambda_1$. Knowledge of the
likelihood ratio $f_1^n / f_0^n$ is therefore sufficient to decide for a hypothesis once the test has stopped. The optimal stopping strategy, however,
requires additional information as will become apparent later on.

Substituting \eqref{eq:decision_rule} into \eqref{eq:cost_min} yields
\begin{align*}
  E[\tau(\psi)] + \lambda_0 \alpha_0(\psi, \phi_{\lambda}^*) + \lambda_1 \alpha_1(\psi, \phi_{\lambda}^*) & = \sum_{n \geq 1} \int \psi_n ( n f^n +
    \min \{ \lambda_0 f_0^n, \lambda_1 f_1^n\}) \, \dint \mu^n \\
  & = \sum_{n \geq 1} \int \psi_n (n + \min \{ \lambda_0 z_0^n, \lambda_1 z_1^n\}) f^n \, \dint \mu^n \\
  & = \sum_{n \geq 1} E\left[ \psi_n \left(n + g_{\lambda}(z^n)\right) \right],
\end{align*}
where $z^n = (z_0^n,z_1^n)$ and
\begin{equation*}
  g_{\lambda}(z) = \min\{ \lambda_0 z_0, \lambda_1 z_1 \}.
\end{equation*}
Problem \eqref{eq:cost_min} therefore reduces to the optimal stopping problem
\begin{equation}
  \label{eq:stop_prob}
  \min_{\psi} \; V_{\lambda}(\psi) := \sum_{n \geq 1} E\left[ \psi_n \left(n + g_{\lambda}(z^n)\right) \right]
\end{equation}
with recursively defined state variables
\begin{equation*}
  z_i^n = \frac{f_i^n}{f^n} =  z^{n-1}_i \frac{f_{i,\theta_{n-1}}}{f_{\theta_{n-1}}}, \quad i=0,1,
\end{equation*}
and $z_i^0 = 1$. The solution of \eqref{eq:stop_prob} is given in the following theorem.

\begin{theorem} \label{th:optimal_stopping}
  Let $\lambda > 0$ be given. Under the three assumptions stated above, the functional equation
  \begin{equation} \label{eq:Wald_Bellman_explicit}
    \rho_{\lambda}(z,\theta) = \min \left\{ g_{\lambda}(z) \; , \; 1 + \int \rho_{\lambda}\left( z_0 \frac{f_{0,\theta}(x)}{f_{\theta}(x)}, z_1
      \frac{f_{1,\theta}(x)}{f_{\theta}(x)}, \xi_{\theta}(x) \right) \, \dint F_{\theta}(x) \right\}
  \end{equation}
  has a unique solution $\rho_{\lambda} \geq 0$ on $(E,\mathcal{E}) := (\mathbb{R}_+^2 \times E_{\theta}, \mathcal{B}_+^2 \otimes
  \mathcal{E}_{\theta})$ and it holds that
  \begin{equation*}
    \rho_{\lambda}(1,1,\theta_0) = V_{\lambda}(\psi_{\lambda}^*).
  \end{equation*}
\end{theorem}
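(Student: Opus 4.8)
The plan is to read \eqref{eq:Wald_Bellman_explicit} as a fixed-point equation $\rho_{\lambda} = T\rho_{\lambda}$ for the dynamic-programming (Wald--Bellman) operator
\begin{equation*}
  (T\rho)(z,\theta) = \min\left\{ g_{\lambda}(z)\,,\; 1 + \int \rho\left( z_0 \frac{f_{0,\theta}(x)}{f_{\theta}(x)}, z_1 \frac{f_{1,\theta}(x)}{f_{\theta}(x)}, \xi_{\theta}(x)\right) \dint F_{\theta}(x) \right\}
\end{equation*}
acting on the cone of nonnegative measurable functions on $(E,\mathcal{E})$. Two elementary structural facts drive the argument: $T$ is monotone (if $\rho \le \rho'$ pointwise then $T\rho \le T\rho'$), and $T\rho \le g_{\lambda}$ for every $\rho \ge 0$ because of the outer minimum. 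First I would establish existence by \emph{value iteration from above}. Setting $\rho^{(0)} = g_{\lambda}$ and $\rho^{(k+1)} = T\rho^{(k)}$, the bound $Tg_{\lambda} \le g_{\lambda} = \rho^{(0)}$ together with monotonicity produces a pointwise nonincreasing sequence $\rho^{(k)} \downarrow \rho_{\lambda}$, bounded below by $0$.

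To show the limit is itself a fixed point I would pass to the limit inside the integral in $T$. The key integrability input is the change-of-measure (likelihood-ratio martingale) identity $\int \tfrac{f_{i,\theta}(x)}{f_{\theta}(x)}\, \dint F_{\theta}(x) = \int f_{i,\theta}\, \dint\mu = 1$, which gives the domination $0 \le \rho^{(k)}(\cdots) \le g_{\lambda}(\cdots) \le \lambda_i z_i \tfrac{f_{i,\theta}(x)}{f_{\theta}(x)}$ with finite $F_{\theta}$-integral $\lambda_i z_i$; in particular all the integrals above are finite at each state, so the iteration is well defined. Dominated convergence then yields $T\rho^{(k)} \to T\rho_{\lambda}$ pointwise, whence $\rho_{\lambda} = \lim_k \rho^{(k+1)} = T\rho_{\lambda}$ solves \eqref{eq:Wald_Bellman_explicit}. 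Next I would identify $\rho^{(k)}$ with the value function of the finite-horizon problem in which the test is forced to stop after at most $k$ continuations: a backward-induction (dynamic-programming) argument shows $\rho^{(k)}(1,1,\theta_0)$ equals the minimal cost over stopping rules with $\tau \le k$. Since the running cost contributes exactly one unit per step, any $\psi$ with $V_{\lambda}(\psi) < \infty$ satisfies $E[\tau] < \infty$, hence $\tau < \infty$ almost surely; truncating such a rule at $k$ and letting $k \to \infty$ shows the finite-horizon optima converge to $\inf_{\psi} V_{\lambda}(\psi)$, so that $\rho_{\lambda}(1,1,\theta_0) = V_{\lambda}(\psi^*_{\lambda})$.

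For uniqueness I would use a verification argument applied from an arbitrary initial state. Let $\rho \ge 0$ be any solution of \eqref{eq:Wald_Bellman_explicit}. By Assumption 1 the conditional law of the next state given $\mathcal{F}_n$ depends only on $\theta_n$, so the integral in $T$ equals $E[\rho(z^{n+1},\theta_{n+1}) \mid \mathcal{F}_n]$ evaluated at $(z^n,\theta_n)$. The inequality $\rho(z,\theta) \le 1 + E[\rho(\text{next})]$ then makes $Y_n := \rho(z^n,\theta_n) + n$ a submartingale under $P$, with equality---hence the martingale property---on the continuation set $\{\rho < g_{\lambda}\}$. Optional stopping applied to $\tau \wedge N$, together with $\rho \le g_{\lambda}$ and the integrability estimates above to send $N \to \infty$, gives $\rho(z,\theta) \le V_{\lambda}(\psi)$ from every initial state $(z,\theta)$ and every $\psi$; the equality case, applied to the rule that stops on first entry into $\{\rho = g_{\lambda}\}$ (which terminates with finite $P$-expected run-length because $\rho$ is finite and time is penalised linearly), supplies the reverse inequality. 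Thus every nonnegative solution coincides, at each state, with the value function, so the solution is unique and equals $\rho_{\lambda}$.

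The main obstacle, in both parts, is the passage to an unbounded horizon: justifying the interchange of limit and integration, and above all the limit $N \to \infty$ in the optional-stopping identities, requires controlling the tails of $g_{\lambda}(z^{\tau \wedge N})$ and verifying that the induced stopping rule stops with finite expected run-length. Assumption 3 enters precisely here, guaranteeing $P(\{\lambda_0 f_0^n = \lambda_1 f_1^n\}) = 0$ so that no randomisation is required and the stopping and continuation regions are cleanly separated, while the likelihood-ratio martingale identity is what keeps all the relevant integrals finite.
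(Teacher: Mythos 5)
Your construction of the solution is essentially the paper's: both start the value iteration at $\rho^{(0)}=g_{\lambda}$, use monotonicity of $T$ together with $Tg_{\lambda}\le g_{\lambda}$ to obtain a nonincreasing, nonnegative sequence $T^{n}\{g_{\lambda}\}$, identify $T^{N-n}\{g_{\lambda}\}$ with the value of the $N$-truncated stopping problem by backward induction on the sufficient statistics $(z^{n},\theta_{n})$, and pass to the limit $N\to\infty$ to get both the fixed point and the identity $\rho_{\lambda}(1,1,\theta_{0})=V_{\lambda}(\psi^{*}_{\lambda})$. Where you genuinely diverge is on uniqueness: the paper only argues that the \emph{limit of the iteration} exists and is well defined, and defers uniqueness of the solution of the functional equation to \citet{Novikov2009}, whereas you supply an explicit verification argument --- any nonnegative solution $\rho$ makes $\rho(z^{n},\theta_{n})+n$ a $P$-submartingale, optional stopping gives $\rho\le V_{\lambda}(\psi)$ for every rule, and the first-entry rule into $\{\rho=g_{\lambda}\}$ gives the reverse inequality, so every solution equals the value function. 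This buys a self-contained proof of the uniqueness claim that the paper's appendix does not actually deliver (note that monotonicity alone only yields that $\rho_{\lambda}$ is the \emph{maximal} nonnegative solution, since $\rho\le g_{\lambda}$ implies $\rho=T^{n}\rho\le T^{n}g_{\lambda}$). The price is the extra limiting step you correctly flag as the main obstacle: sending $N\to\infty$ in the optional-stopping identity requires $E\bigl[g_{\lambda}(z^{N})\boldsymbol{1}_{\{\tau>N\}}\bigr]\to 0$, and since $E\bigl[\lambda_{i}z_{i}^{N}\boldsymbol{1}_{\{\tau>N\}}\bigr]=\lambda_{i}P_{i}(\tau>N)$ this needs $\tau<\infty$ under $P_{0}$ or $P_{1}$, not merely under $P$; this is exactly the kind of technical point the paper sidesteps by citing the optimal-stopping literature, and you would need to close it (e.g.\ via the finiteness of $\rho$ and the linear time penalty, as in Novikov's treatment) for the verification route to be fully rigorous.
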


A proof of theorem \ref{th:optimal_stopping} is given in Appendix \ref{apd:proof_optimal_stopping}. By a change in measure,
\eqref{eq:Wald_Bellman_explicit} can alternatively be written as
\begin{equation}
  \rho_{\lambda}(z,\theta) = \min \left\{ g_{\lambda}(z) \; , \; 1 + \int \rho_{\lambda} \, \dint H_{z,\theta} \right\},
  \label{eq:Wald_Bellman_modified}
\end{equation}
where $\{H_{z,\theta} : (z,\theta) \in E\}$ is a family of probability measures on $(E,\mathcal{E})$ satisfying
\begin{align}
  H_{z,\theta}(B \times D) = F_{\theta} \left( \left\{ x \in E_X \,:\, \left(z_0 \frac{f_{0,\theta}(x)}{f_{\theta}(x)}, z_1
    \frac{f_{1,\theta}(x)}{f_{\theta}(x)}\right) \in B, \; \xi_{\theta}(x) \in D \right\} \right)
  \label{eq:definition_H}
\end{align}
for all $B \in \mathcal{B}_+^2$ and $D \in \mathcal{E}_{\theta}$. Use of both formulations is made in the following. The notation
$H^i_{z,\theta}$, $i=0,1$, is used to refer to the families of distributions where $F$ in \eqref{eq:definition_H} is replaced by $F_i$. Note that
$g_{\lambda}$ is $H_{z,\theta}$-integrable for all $\lambda \in \mathbb{R}_+^2$ and $(z,\theta) \in E$ since
\begin{equation*}
  \int g_{\lambda} \, \dint H_{z,\theta} \leq \int \lambda_0 z_0 \frac{f_{i,\theta}}{f_{\theta}} \, \dint F_{\theta} = \lambda_0 z_0 < \infty.
\end{equation*}
Therefore, $\rho_{\lambda} \leq g_{\lambda}$ is $H_{z,\theta}$-integrable as well.

The importance of Theorem \ref{th:optimal_stopping} lies in the fact that it allows the optimal stopping region to be specified on the codomain
of $(z^n,\theta_n)$, which is time invariant, rather than the sequence of product spaces $(E_X^n)_{n\geq1}$ corresponding to the raw observations. Let
the stopping region $\mathcal{S}_{\lambda}$, its boundary $\partial\mathcal{S}_{\lambda}$, and its complement $\overline{\mathcal{S}}_{\lambda}$ be
defined by
\begin{align*}
  \mathcal{S}_{\lambda} & = \left\{ (z,\theta) \in E \,:\, g_{\lambda}(z) \; < \; 1 + \int
    \rho_{\lambda} \, \dint H_{z,\theta} \right\}, \\
  \partial\mathcal{S}_{\lambda} & = \left\{ (z,\theta) \in E \,:\, g_{\lambda}(z) \; = \; 1 + \int
    \rho_{\lambda} \, \dint H_{z,\theta} \right\}, \\
  \overline{\mathcal{S}}_{\lambda} & = \left\{ (z,\theta) \in E \,:\, g_{\lambda}(z) \; > \; 1 + \int
    \rho_{\lambda} \, \dint H_{z,\theta} \right\}.
\end{align*}
The connection between the optimal stopping policy $\psi^*$ and the functional equation in Theorem \ref{th:optimal_stopping} is stated in the
following corollary.
\begin{corollary} \label{cl:stopping_region}
  Under the assumptions given above, the stopping policy that minimizes $V_{\lambda}$ in \eqref{eq:stop_prob} fulfills
  \begin{equation*}
    \boldsymbol{1}_{\mathcal{S}_{\lambda}} \leq \psi_{\lambda,n}^* \leq \boldsymbol{1}_{\mathcal{S}_{\lambda} \cup \partial\mathcal{S}_{\lambda}}
    \quad \forall n \geq 1.
  \end{equation*}
\end{corollary}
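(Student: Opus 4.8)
The plan is to read the corollary directly off the Bellman equation \eqref{eq:Wald_Bellman_modified} via the principle of optimality, using the fact---established in the proof of Theorem \ref{th:optimal_stopping}---that $\rho_\lambda$ is the optimal cost-to-go of the stopping problem. First I would record the value-function interpretation of $\rho_\lambda$: if the problem is started from an arbitrary state $(z,\theta) \in E$ at time $n$ on the event that the test has not yet stopped, then the minimal achievable cost is $n + \rho_\lambda(z,\theta)$, where the already-accrued run-length $n$ enters additively and $\rho_\lambda(z,\theta)$ captures the time-invariant remainder. Time-homogeneity of the Markov representation (Assumption 1) is what makes this remainder independent of $n$, and the two terms inside the minimum in \eqref{eq:Wald_Bellman_modified} are then exactly the cost of stopping now, $g_\lambda(z)$, and the optimal expected cost of taking one more observation and continuing optimally, $1 + \int \rho_\lambda \, \dint H_{z,\theta}$.

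With this in hand the argument becomes a pointwise comparison combined with an exchange argument. Fix $n \geq 1$ and condition on $\{\tau \geq n\}$. On $\mathcal{S}_\lambda$ the stopping cost is strictly smaller than the optimal continuation cost, so any policy that sets $\psi_{\lambda,n} = 0$ on a positive-probability subset of $\mathcal{S}_\lambda$ can be strictly improved by stopping there instead; optimality therefore forces $\psi_{\lambda,n}^* = 1$ on $\mathcal{S}_\lambda$, which is the lower bound $\boldsymbol{1}_{\mathcal{S}_\lambda} \leq \psi_{\lambda,n}^*$. Symmetrically, on $\overline{\mathcal{S}}_\lambda$ the continuation cost is strictly smaller, so stopping on a positive-probability subset of $\overline{\mathcal{S}}_\lambda$ can be strictly improved by continuing; hence $\psi_{\lambda,n}^* = 0$ on $\overline{\mathcal{S}}_\lambda$, which is the upper bound $\psi_{\lambda,n}^* \leq \boldsymbol{1}_{\mathcal{S}_\lambda \cup \partial\mathcal{S}_\lambda}$. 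On the boundary $\partial\mathcal{S}_\lambda$ the two costs coincide, so either action is optimal and the rule is left unconstrained, which is precisely the gap between the two indicator bounds.

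In carrying out the exchange argument I would check that the modified stopping rule is again adapted and measurable, and that the cost comparison is legitimate. On $\{\tau \geq n\}$ the accrued run-length term is common to both the stopped and the continued policy and therefore cancels, so the comparison reduces to $g_\lambda(z)$ versus $1 + \int \rho_\lambda \, \dint H_{z,\theta}$ exactly as in \eqref{eq:Wald_Bellman_modified}. Integrability of $g_\lambda$, and hence of $\rho_\lambda \leq g_\lambda$, against $H_{z,\theta}$, noted after \eqref{eq:definition_H}, guarantees that all expectations involved are finite and that the local improvement translates into a strict decrease of $V_\lambda$.

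I expect the main obstacle to be the rigorous justification that $\rho_\lambda$ is genuinely the optimal cost-to-go from every state and that one-step optimality in the Bellman equation yields a globally optimal policy rather than merely a locally improving modification. This verification step---including the optional-stopping and interchange-of-limits estimates needed to control the tail of the possibly unbounded stopping time---is exactly what the proof of Theorem \ref{th:optimal_stopping} supplies, so I would invoke that machinery rather than re-derive it, making the corollary a short structural consequence of the theorem.
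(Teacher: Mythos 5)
Your proposal is correct and follows essentially the same route as the paper, which dispatches the corollary as an immediate consequence of the definition of $\rho_{\lambda}$ (with a pointer to Novikov, 2009); your value-function interpretation of $\rho_{\lambda}$ and the pointwise comparison of $g_{\lambda}(z)$ against $1 + \int \rho_{\lambda}\,\dint H_{z,\theta}$ is exactly the standard verification argument being invoked there. The only difference is one of explicitness: you spell out the exchange argument and correctly defer the global-optimality and tail-control issues to the machinery of Theorem \ref{th:optimal_stopping}, which is where the paper also locates that work.
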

The corollary follows immediately from the the definition of $\rho_{\lambda}$, cf.\ \cite{Novikov2009}. The time-in\-va\-ri\-ance of the stopping
strategy reflects the time-homogeneity of the Markov sequence underpinning the test. In Section \ref{sec:properties} it is further shown that under
the given assumptions $\partial\mathcal{S}_{\lambda}$ is a $P$ null set so that the relations in Corollary \ref{cl:stopping_region} hold with equality
in an almost sure sense.

Theorem \ref{th:optimal_stopping} and, in particular, the implicit definition of the cost function $\rho_{\lambda}$ in
\eqref{eq:Wald_Bellman_explicit}, provides the basis for formulating the sequential detection problem as a linear program. This formulation requires
several properties of the cost function $\rho_{\lambda}$, which are detailed in the following section.

\section{Properties of the Cost Function $\boldsymbol{\rho_{\lambda}}$}
\label{sec:properties}
In this section, some basic properties of $\rho_{\lambda}$ are derived and the close relationship between its derivatives with respect to $\lambda$
and the error probabilities of the corresponding sequential test are shown. Some technical issues need to be addressed first.

\begin{lemma} \label{lm:uniform_convergence}
  The sequence $(\rho_{\lambda}^n)_{n\geq0}$ with $\rho_{\lambda}^n = T^n(g_{\lambda})$ and $T$ defined in \eqref{eq:def_T}, converges uniformly on
  $E$.
\end{lemma}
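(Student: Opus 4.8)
The plan is to treat $\rho_\lambda^n = T^n(g_\lambda)$ as a standard value iteration and first settle pointwise convergence, then upgrade it to uniform convergence by a quantitative estimate on the tail. Recall that $T(\rho)(z,\theta) = \min\{g_\lambda(z),\,1 + \int \rho\,\dint H_{z,\theta}\}$, so $T$ is order preserving and satisfies $T(\rho)\le g_\lambda$ pointwise. Hence $\rho_\lambda^1 = T(g_\lambda)\le g_\lambda = \rho_\lambda^0$, and by monotonicity the sequence $(\rho_\lambda^n)$ is non-increasing. It is bounded below by $\rho_\lambda\ge 0$, because applying $T^n$ to the inequality $g_\lambda\ge\rho_\lambda$ and using the fixed-point identity $\rho_\lambda = T(\rho_\lambda)$ from Theorem \ref{th:optimal_stopping} gives $\rho_\lambda^n\ge\rho_\lambda$. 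A non-increasing sequence bounded below converges pointwise; since $\rho_\lambda^n\le g_\lambda$ is $H_{z,\theta}$-integrable, dominated convergence lets me pass the limit through the integral, so the limit is a fixed point of $T$ and, by the uniqueness in Theorem \ref{th:optimal_stopping}, equals $\rho_\lambda$. The remaining task is therefore to show that the convergence $\rho_\lambda^n\downarrow\rho_\lambda$ is uniform on $E$.

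The core of the argument is a recursive bound on $D^n := \rho_\lambda^n - \rho_\lambda\ge 0$. Writing $C := \overline{\mathcal{S}}_\lambda\cup\partial\mathcal{S}_\lambda$ for the closed continuation region, I would split by region. On the stopping region $\mathcal{S}_\lambda$ both $\rho_\lambda^n$ and $\rho_\lambda$ equal $g_\lambda$ (the continuation value dominates for both, since $\rho_\lambda^{n-1}\ge\rho_\lambda$), so $D^n=0$ there. On $C$ the optimal action is to continue, $\rho_\lambda = 1+\int\rho_\lambda\,\dint H_{z,\theta}$, while $\rho_\lambda^n\le 1+\int\rho_\lambda^{n-1}\,\dint H_{z,\theta}$, whence $D^n(z,\theta)\le\boldsymbol{1}_C(z,\theta)\int D^{n-1}\,\dint H_{z,\theta}$. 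Unfolding this inequality $n$ times and interpreting the iterated kernels probabilistically yields
\[
  D^n(z,\theta)\;\le\; E_{z,\theta}\!\left[\Big(\textstyle\prod_{k=0}^{n-1}\boldsymbol{1}_C(z^k,\theta_k)\Big)\,D^0(z^n,\theta_n)\right],
\]
where $E_{z,\theta}$ is expectation under the Markov chain with kernel $H$ started at $(z,\theta)$, and the product indicator is exactly the event that the chain remains in the continuation region for its first $n$ steps.

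To convert this into uniform decay, I would bound $D^0\le g_\lambda\le\lambda_i z_i^{\,n}$ for $i=0,1$ and exploit that $z_i^n$ is the Radon--Nikodym derivative $\dint F_i^n/\dint F^n$, a $P$-martingale. The change of measure from $H_{z,\theta}$ to $H^i_{z,\theta}$ turns $E_{z,\theta}[\boldsymbol{1}_{A}\,\lambda_i z_i^{\,n}]$ into $\lambda_i z_i$ times the probability, under $P_i$, of the chain staying in $C$ for $n$ steps, giving
\[
  D^n(z,\theta)\;\le\;\min_{i\in\{0,1\}}\Big\{\lambda_i\,z_i\;P_i^{(z,\theta)}\big(\text{chain remains in }C\text{ for }n\text{ steps}\big)\Big\}.
\]
The hard part will be showing that the right-hand side vanishes uniformly over the non-compact space $E=\mathbb{R}_+^2\times E_\theta$: the prefactor $\lambda_i z_i$ grows without bound as one moves deeper into the continuation region, and it must be offset by the decay of the escape probability. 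I expect to close this by using that under each $P_i$ the optimal test has finite expected run-length, so the chain cannot remain in $C$ indefinitely, together with a bound on the staying probability that is uniform in the starting state (so that the growth of $\lambda_i z_i$ is dominated). Note that Dini's theorem would give uniform convergence for free only after reducing to a compact set, which is not available here; hence this probabilistic escape estimate, and its uniformity, is the genuine obstacle rather than the monotonicity, which is routine.
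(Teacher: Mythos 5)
Your setup is sound and, up to the point where you unfold the recursion, it tracks the paper: monotonicity of $T$ gives $\rho_{\lambda}^n \downarrow \rho_{\lambda}$ pointwise, and the one-step inequality $\Delta\rho_{\lambda}^n \le \int \Delta\rho_{\lambda}^{n-1}\,\dint H_{z,\theta}$ (your $D^n \le \boldsymbol{1}_C\int D^{n-1}\,\dint H_{z,\theta}$) is exactly the recursive estimate the paper also exploits. The genuine gap is the step you yourself flag as ``the hard part'' and then leave as an expectation rather than a proof: the claim that the staying probability $P_i^{(z,\theta)}(\text{chain remains in } C \text{ for } n \text{ steps})$ decays \emph{uniformly in the starting state}, and fast enough to beat the prefactor $\lambda_i z_i = g_{\lambda}(z)$, which need not be bounded on the continuation region once $z$ is genuinely two-dimensional. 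Finite expected run-length is a statement about the single starting point $(1,1,\theta_0)$ and gives no uniform control over starting states deep in $\overline{\mathcal{S}}_{\lambda}$; none of the three standing assumptions yields a geometric or even uniform escape rate, so your bound $D^n \le \min_i \lambda_i z_i\, P_i^{(z,\theta)}(\tau>n)$ does not by itself imply $\sup_E D^n\to 0$. As written, the proof does not close.

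The paper resolves precisely this point by a different device that avoids escape-probability estimates and the change of measure altogether. It introduces the envelope set function $H^*(B)=\sup_{(z,\theta)\in E}H_{z,\theta}(B)$, invokes Egorov's theorem to obtain, for each $\varepsilon>0$, a set $B_{\varepsilon}$ with $H^*(B_{\varepsilon})<\varepsilon$ outside of which the convergence is already uniform, and then feeds this back into the same one-step recursion to get
\[
  \sup_{E}\Delta\rho_{\lambda}^n \;\le\; \sup_{E\setminus B_{\varepsilon}}\Delta\rho_{\lambda}^{n-1}
  \;+\; \varepsilon\,\sup_{E}\Delta\rho_{\lambda}^{n-1},
\]
whose iteration forces $\sup_{E}\Delta\rho_{\lambda}^n\to 0$ for any $\varepsilon<1$, given that $\sup_{E}\Delta\rho_{\lambda}^n$ is finite for some $n$. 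If you want to salvage your martingale route, you would need to prove a uniform bound of the form $g_{\lambda}(z)\,P_i^{(z,\theta)}(\tau>n)\le c_n$ with $c_n\to 0$ independent of $(z,\theta)$; that is a substantive additional lemma, not a corollary of anything you have established.
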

The proof of Lemma \ref{lm:uniform_convergence} is detailed in Appendix \ref{apd:proof_uniform_convergence}.

Sequences of functions of the same structure as $(\rho_{\lambda}^n)_{n\geq0}$, where the $n$th function is defined recursively via an integration over
the previous function, are encountered repeatedly in this section. The same arguments as detailed in Appendix \ref{apd:proof_uniform_convergence} can
be used to show that in such cases almost uniform convergence implies uniform convergence. Therefore, instead of giving a formal proof again, we refer
to Lemma \ref{lm:uniform_convergence} in what follows. 

The next lemma bounds the influence of the likelihood ratio on the associated cost. Qualitatively speaking, it states that $\rho_{\lambda}$ is
monotonic and sublinear in $z$.
\begin{lemma} \label{lm:rho_bounds}
  For all $a,z \in \mathbb{R}_+^2$ and $\theta \in E_{\theta}$, the cost function $\rho_{\lambda}$ as defined in \eqref{eq:Wald_Bellman_explicit}
  satisfies
  \begin{equation*}
    \min\{a_0,a_1,1\} \rho_{\lambda}(z,\theta) \, \leq \, \rho_{\lambda}(a_0 z_0, a_1 z_1,\theta) \, \leq \, \max\{a_0,a_1,1\}
    \rho_{\lambda}(z,\theta).
  \end{equation*}
\end{lemma}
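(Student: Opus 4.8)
The plan is to prove both inequalities first for the finite-horizon iterates $\rho_{\lambda}^n = T^n(g_{\lambda})$ from Lemma \ref{lm:uniform_convergence} and then pass to the limit. Throughout I would write $c := \min\{a_0,a_1,1\}$ and $C := \max\{a_0,a_1,1\}$, so that $0 < c \leq 1 \leq C$, and aim to show $c\,\rho_{\lambda}^n(z,\theta) \leq \rho_{\lambda}^n(a_0 z_0, a_1 z_1, \theta) \leq C\,\rho_{\lambda}^n(z,\theta)$ for every $n \geq 0$ by induction on $n$.

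For the base case $\rho_{\lambda}^0 = g_{\lambda}$, I would observe that $g_{\lambda}(a_0 z_0, a_1 z_1) = \min\{\lambda_0 a_0 z_0, \lambda_1 a_1 z_1\}$. Since $c \leq a_i \leq C$ for $i = 0,1$, each argument of the minimum lies between its $c$-multiple and its $C$-multiple, and pulling the common factor out of the minimum gives $c\,g_{\lambda}(z) \leq g_{\lambda}(a_0 z_0, a_1 z_1) \leq C\,g_{\lambda}(z)$, which is the claim at $n = 0$.

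For the inductive step I would set $w_i := z_i f_{i,\theta}(x)/f_{\theta}(x)$ and abbreviate the continuation integral $I := \int \rho_{\lambda}^n(w_0, w_1, \xi_{\theta}(x))\,\dint F_{\theta}(x)$, so that $\rho_{\lambda}^{n+1}(z,\theta) = \min\{g_{\lambda}(z), 1 + I\}$. Replacing $z$ by $(a_0 z_0, a_1 z_1)$ turns the integrand into $\rho_{\lambda}^n(a_0 w_0, a_1 w_1, \xi_{\theta}(x))$, to which the induction hypothesis applies pointwise in $x$; combined with the base-case bound on $g_{\lambda}$ and monotonicity of the minimum, this gives for the upper estimate
$$\rho_{\lambda}^{n+1}(a_0 z_0, a_1 z_1, \theta) \leq \min\{C\,g_{\lambda}(z),\, 1 + C\,I\} \leq \min\{C\,g_{\lambda}(z),\, C + C\,I\} = C\,\rho_{\lambda}^{n+1}(z,\theta),$$
where the middle step uses $1 \leq C$, and symmetrically for the lower estimate
$$\rho_{\lambda}^{n+1}(a_0 z_0, a_1 z_1, \theta) \geq \min\{c\,g_{\lambda}(z),\, 1 + c\,I\} \geq \min\{c\,g_{\lambda}(z),\, c + c\,I\} = c\,\rho_{\lambda}^{n+1}(z,\theta),$$
using $c \leq 1$. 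Letting $n \to \infty$ and invoking the pointwise limit implied by the uniform convergence in Lemma \ref{lm:uniform_convergence} then transfers both bounds to $\rho_{\lambda}$ itself.

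I expect the one genuinely delicate point to be the additive unit cost in the Bellman recursion: the factors $a_0, a_1$ rescale only the likelihood-ratio coordinate, whereas the continuation term carries the unscaled constant $1$ accounting for one additional observation. This is exactly why the constant $1$ must be included in the definitions of $c$ and $C$ — bounding $1$ above by $C \geq 1$ and below by $c \leq 1$ is what allows the common factor to be pulled out of the entire minimum in each direction. Everything else is routine monotonicity of the minimum and linearity of the integral.
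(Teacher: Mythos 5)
Your proof is correct and follows essentially the same route as the paper: induction on the Bellman iterates $\rho_{\lambda}^n = T^n(g_{\lambda})$, with the base case $g_{\lambda}(a_0 z_0, a_1 z_1)$ bounded by pulling the common factor out of the minimum, the additive continuation cost $1$ absorbed via $c \leq 1 \leq C$, and the limit taken using the convergence from Lemma \ref{lm:uniform_convergence}. The only difference is cosmetic --- you write out both bounds where the paper proves the lower one and declares the upper one analogous --- and your remark about why the constant $1$ must appear in $c$ and $C$ correctly identifies the one non-routine point.
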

\begin{proof}
  We only prove the lower bound since the upper one can be shown analogously. Assume that the lemma holds for some $\rho_{\lambda}^n$ and let $a^* =
  \min\{a_0,a_1,1\}$. By induction it holds that
  \begin{align*}
    \rho^{n+1}_{\lambda}(a_0 z_0, a_1 z_1,\theta) & = \min \left\{ g_{\lambda}(a_0 z_0, a_1 z_1) \,,\, 1+\int \rho^n_{\lambda}\left(
      a_0 z_0 \frac{f_{0,\theta}}{f_{\theta}}, a_1 z_1 \frac{f_{1,\theta}}{f_{\theta}}, \xi_{\theta} \right) \dint F_{\theta} \right\} \\
    & \geq \min \left\{ a^* g_{\lambda}(z) \,,\, a^* + a^* \int \rho^n_{\lambda}\left(z_i \frac{f_{0,\theta}}{f_{\theta}}, z_1
      \frac{f_{1,\theta}}{f_{\theta}}, \xi_{\theta} \right) \dint F_{\theta} \right\} \\
    & = a^* \rho^{n+1}_{\lambda}(z,\theta).
  \end{align*}
  The induction basis is $g_{\lambda}(a_0 z_0, a_1 z_1) \geq a^* g_{\lambda}(z)$.  By \citet{Rudin1987}, uniform convergence of $(\rho^n_{\lambda})_{n
  \geq 1}$ guarantees that the bound holds for $\rho_{\lambda}$ as well.
\end{proof}

Using the bounds in Lemma \ref{lm:rho_bounds}, it is easy to show that the boundary $\partial\mathcal{S}_{\lambda}$ is indeed a $P$ null set.
\begin{lemma} \label{lm:null_set}
  If the likelihood ratios $z^n$ are continuous random variables for all $n \geq 1$, the boundary $\partial\mathcal{S}_{\lambda}$ of the optimal
  stopping region is a null set under $P$, i.e.,
  \begin{align*}
    H_{z,\theta}(\partial\mathcal{S}_{\lambda}) = 0 \quad \forall (z,\theta) \in E.
  \end{align*}
\end{lemma}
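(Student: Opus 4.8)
The plan is to separate a geometric claim about the boundary from a measure-theoretic transfer. Write $c_{\lambda}(z,\theta) = 1 + \int \rho_{\lambda}\,\dint H_{z,\theta}$ for the continuation cost, so that $\partial\mathcal{S}_{\lambda} = \{(z,\theta)\in E : g_{\lambda}(z) = c_{\lambda}(z,\theta)\}$, and for fixed $\theta$ let $B_{\theta} = \{z\in\mathbb{R}_+^2 : (z,\theta)\in\partial\mathcal{S}_{\lambda}\}$ be the corresponding section. First I would prove that every section $B_{\theta}$ is Lebesgue-null; then I would use the continuity of the likelihood ratios to deduce that the transition kernels $H_{z,\theta}$ charge $\partial\mathcal{S}_{\lambda}$ with zero mass.

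For the geometric step the idea is to scale the likelihood ratio along a ray and track the sign of $\Phi(t) := c_{\lambda}(tz,\theta) - g_{\lambda}(tz)$, $t>0$. Since $H_{tz,\theta}$ is the image of $H_{z,\theta}$ under $(z',\theta')\mapsto(tz',\theta')$, one has $c_{\lambda}(tz,\theta) = 1 + h(t)$ with $h(t) := \int \rho_{\lambda}(tz',\theta')\,\dint H_{z,\theta}$, whereas $g_{\lambda}(tz)=t\,g_{\lambda}(z)$ by homogeneity. Applying Lemma~\ref{lm:rho_bounds} with $a=(t,t)$ shows that $t\mapsto\rho_{\lambda}(tz',\theta')/t$ is non-increasing, hence so is $h(t)/t$, with $\lim_{t\to0^+}h(t)/t=\int g_{\lambda}\,\dint H_{z,\theta}\le g_{\lambda}(z)$ (the inequality because $\int z_i'\,\dint H_{z,\theta}=z_i$). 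From the monotonicity of $h(t)/t$ together with the bound $h(t)/t\le g_{\lambda}(z)$ one gets $\Phi$ non-increasing, and $\Phi$ cannot vanish on an interval, since $\Phi\equiv0$ on $[t_1,t_2]$ would make $h(t)/t=g_{\lambda}(z)-1/t$ strictly increasing there. A non-increasing function whose zero set has empty interior has at most one zero, so each ray meets $B_{\theta}$ in at most one point and $B_{\theta}$ is Lebesgue-null.

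It remains to pass from null sections to $H_{z,\theta}(\partial\mathcal{S}_{\lambda})=0$. Writing the kernel out, $H_{z,\theta}(\partial\mathcal{S}_{\lambda})=F_{\theta}(\{x:(z',\theta')\in\partial\mathcal{S}_{\lambda}\})$ with $z'=(z_0 f_{0,\theta}(x)/f_{\theta}(x),\,z_1 f_{1,\theta}(x)/f_{\theta}(x))$ and $\theta'=\xi_{\theta}(x)$. Scaling the current state $z\mapsto tz$ moves $z'$ along a fixed ray while leaving $\theta'$ unchanged, so by the previous step each $x$ lies on the boundary for at most one scale $t^{*}(x)$; as the level sets $\{t^{*}=t\}$ are disjoint, $H_{tz,\theta}(\partial\mathcal{S}_{\lambda})=0$ for all but countably many $t$. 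Upgrading this to every $(z,\theta)$---in particular to the scale $t=1$---is where the continuity of the likelihood ratios has to enter, to forbid an atom of $t^{*}(X)$ at the relevant scale.

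I expect this last transfer to be the main obstacle, rather than the geometry. Because $\theta'=\xi_{\theta}(x)$ co-varies with the observation, one cannot simply push the null section $B_{\theta'}$ forward through a density argument: in the examples of interest $z_0'$ and $z_1'$ are functionally dependent, so the law of $z'$ is continuous only in the sense of being non-atomic, not absolutely continuous. The plan is therefore to disintegrate $H_{z,\theta}$ along $\theta'$ and to show that the conditional law of $z'$ assigns zero mass to the one-dimensional section $B_{\theta'}$; ruling out, from the non-atomicity hypothesis, that this conditional law concentrates on that section is the delicate point that Lemma~\ref{lm:rho_bounds} alone does not settle.
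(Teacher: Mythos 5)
Your geometric step is correct and is in substance identical to the paper's key computation: writing the continuation cost along a ray as $1+h(t)$ with $h(t)/t$ non\-increasing (Lemma~\ref{lm:rho_bounds} with $a_0=a_1=t$) while $g_{\lambda}(tz)=t\,g_{\lambda}(z)$, and observing that $1+h(t)=t\,g_{\lambda}(z)$ on a nondegenerate interval would force $h(t)/t=g_{\lambda}(z)-1/t$ to increase. The paper runs exactly this inequality as a direct contradiction: if $\{tz^{*}:t\in[a,1]\}\subset\partial\mathcal{S}_{\lambda}$, then
\begin{equation*}
  1+\int\rho_{\lambda}\,\dint H_{az^{*},\theta}\;\geq\;1+a\int\rho_{\lambda}\,\dint H_{z^{*},\theta}\;>\;a\Bigl(1+\int\rho_{\lambda}\,\dint H_{z^{*},\theta}\Bigr)\;=\;a\,g_{\lambda}(z^{*})\;=\;g_{\lambda}(az^{*}),
\end{equation*}
so $az^{*}$ lies in the open stopping region. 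Up to this point you have reproved the paper's core argument in a cleaner, monotone-function form; the "at most one boundary point per ray" conclusion is exactly the paper's contradiction.

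The genuine gap is the transfer step, and you have diagnosed it accurately but not closed it. Your argument delivers $H_{tz,\theta}(\partial\mathcal{S}_{\lambda})=0$ for all but countably many scales $t$, which is not the statement of the lemma: you need it at $t=1$ and for every $(z,\theta)\in E$, and nothing in the proposal excludes an atom of $t^{*}(X)$ at the one scale that matters. The paper closes this point differently, and rather tersely: it argues in the contrapositive that $H_{z,\theta}(\partial\mathcal{S}_{\lambda})>0$, together with the assumed continuity of the likelihood ratios $z^{n}$, forces an entire ray segment $[az^{*},z^{*}]$ into $\partial\mathcal{S}_{\lambda}$, and only then applies the scaling inequality. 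In other words, the continuity hypothesis is spent on the implication "positive mass implies segment", not on a disintegration of $H_{z,\theta}$ along $\theta'$; the scenario you worry about, namely the conditional law of $z'$ given $\theta'$ concentrating on the one-point-per-ray section $B_{\theta'}$, is precisely what that implication must rule out. As submitted, your proposal therefore stops one step short of a proof, and the missing step is the one the lemma's hypothesis exists to handle.
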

\begin{proof}
  Lemma \ref{lm:null_set} can be shown by contradiction. Assume that $H_{z,\theta}(\partial\mathcal{S}_{\lambda}) > 0$ for some $(z,\theta)$, i.e.,
  assume that if $(z^n,\theta_n) = (z,\theta)$, then there is a nonzero probability that the test hits $\partial\mathcal{S}_{\lambda}$ with the next
  update of the test statistic. Since  all $z^n$ are assumed to be continuous random variables, this implies that an interval $[az^*, z^*]$, $a < 1$,
  exist on which
  \begin{align*}
    1 + \int \rho_{\lambda} \dint H_{z,\theta} = g_{\lambda}(z)
  \end{align*}
  for all $z \in [az^*, z^*]$. However, by Lemma \ref{lm:rho_bounds},
  \begin{align*}
    1 + \int \rho_{\lambda} \, \dint H_{az^*,\theta} & \geq 1 + \int a \rho_{\lambda}\left( z_0 \frac{f_{0,\theta}}{f_{\theta}}, z_1
      \frac{f_{1,\theta}}{f_{\theta}}, \xi_{\theta} \right) \, \dint F_{\theta} \\
    & > a \left(1 + \int  \rho_{\lambda}\left( z_0 \frac{f_{0,\theta}}{f_{\theta^*}}, z_1 \frac{f_{1,\theta}}{f_{\theta}}, \xi_{\theta}
      \right) \, \dint F_{\theta} \right) \\
    & = a \left( 1 + \int \rho_{\lambda} \, \dint H_{z^*,\theta} \right) \\
    & = a g_{\lambda}(z^*) = g_{\lambda}(az^*),
  \end{align*}
  which contradicts the assumption.
\end{proof}

Lemma \ref{lm:null_set} implies that the optimal sequential test is non-randomized since the cost minimizing decision is almost surely unambiguous.

It is now possible to give expressions for the derivatives of $\rho_{\lambda}$.

\begin{theorem} \label{th:derivative}
  Let  $\rho'_{\lambda_i}$ denote the derivative of $\rho_{\lambda}$ with respect to $\lambda_i$, $i=0,1$. Further, define
  \begin{align*}
    \mathcal{S}_{\lambda_i} := \mathcal{S}_{\lambda} \cap \{ \lambda_i z_i < \lambda_{1-i}z_{1-i}\}.
  \end{align*}
  The two Fredholm integral equations of the second kind
  \begin{align} \label{eq:derivative_lemma_eq}
    r_{\lambda_i}(z,\theta) = z_i H^i_{z,\theta}\left(\mathcal{S}_{\lambda_i}\right) + \int_{\overline{\mathcal{S}}_{\lambda}} r_{\lambda_i} \,
      \dint H_{z,\theta}, \quad i=0,1,
  \end{align}
  have a unique solution $r_{\lambda} = (r_{\lambda_0},r_{\lambda_1})$ on $\overline{\mathcal{S}}_{\lambda}$ and it holds that
  \begin{align*}
    \rho'_{\lambda_i}(z,\theta) = \begin{cases}
                                    z_i, & (z,\theta) \in \mathcal{S}_{\lambda_i} \\
                                    r_{\lambda_i}(z,\theta), & (z,\theta) \in \overline{\mathcal{S}}_{\lambda} \\
                                    0, & (z,\theta) \in \mathcal{S}_{\lambda_{1-i}}.
                                  \end{cases}
  \end{align*}
\end{theorem}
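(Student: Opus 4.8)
The plan is to treat the three regions of the claimed formula separately and then assemble the integral equation \eqref{eq:derivative_lemma_eq}. On the stopping region $\mathcal{S}_{\lambda}$ the cost function coincides with $g_{\lambda} = \min\{\lambda_0 z_0, \lambda_1 z_1\}$, so the derivative can be read off directly. On $\mathcal{S}_{\lambda_i}$ the minimum is attained by $\lambda_i z_i$, whence $\rho'_{\lambda_i} = z_i$, while on $\mathcal{S}_{\lambda_{1-i}}$ the active branch $\lambda_{1-i} z_{1-i}$ does not depend on $\lambda_i$, giving $\rho'_{\lambda_i} = 0$. The ambiguous locus $\{\lambda_0 z_0 = \lambda_1 z_1\}$ carries no $H_{z,\theta}$-mass by Assumption~3 (cf.\ Lemma \ref{lm:null_set}) and may be discarded. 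This settles the first and third cases.

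For the continuation region I would differentiate the fixed-point relation, working with the approximating sequence $\rho_{\lambda}^n = T^n(g_{\lambda})$ of Lemma \ref{lm:uniform_convergence}, which converges uniformly on $E$; each $\rho_{\lambda}^n$ is a finite iterate of minima and integrals and is therefore differentiable in $\lambda$ off a null set. Writing $r_{\lambda_i}^n := (\rho_{\lambda}^n)'_{\lambda_i}$ and differentiating $\rho_{\lambda}^{n+1} = \min\{g_{\lambda}, 1 + \int \rho_{\lambda}^n \, \dint H_{z,\theta}\}$, the envelope property---both branches of the minimum coincide on $\partial\mathcal{S}_{\lambda}$, which is $H_{z,\theta}$-null by Lemma \ref{lm:null_set}---permits differentiating the active branch only, so that on the continuation region $r_{\lambda_i}^{n+1}(z,\theta) = \int r_{\lambda_i}^n \, \dint H_{z,\theta}$. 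Splitting this integral over $\mathcal{S}_{\lambda_i}$, $\mathcal{S}_{\lambda_{1-i}}$ and $\overline{\mathcal{S}}_{\lambda}$ and inserting the stopping-region values from the first step, the contribution of $\mathcal{S}_{\lambda_{1-i}}$ vanishes and that of $\mathcal{S}_{\lambda_i}$ is $\int_{\mathcal{S}_{\lambda_i}} z_i \frac{f_{i,\theta}}{f_{\theta}} \, \dint H_{z,\theta}$. Applying the change of measure $\frac{f_{i,\theta}}{f_{\theta}} \, \dint F_{\theta} = \dint F_{i,\theta}$ converts this into $z_i H^i_{z,\theta}(\mathcal{S}_{\lambda_i})$, so that $r_{\lambda_i}^{n+1}(z,\theta) = z_i H^i_{z,\theta}(\mathcal{S}_{\lambda_i}) + \int_{\overline{\mathcal{S}}_{\lambda}} r_{\lambda_i}^n \, \dint H_{z,\theta}$, the discretized form of \eqref{eq:derivative_lemma_eq}.

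It then remains to pass to the limit and establish uniqueness. Uniform convergence of $(r_{\lambda_i}^n)_n$ follows by the same contraction argument as in Lemma \ref{lm:uniform_convergence}, to which the text already permits an appeal; combined with the standard theorem that the derivative of a uniform limit equals the uniform limit of the derivatives (as used in Lemma \ref{lm:rho_bounds}), this identifies the limit $r_{\lambda_i}$ with $\rho'_{\lambda_i}$ on $\overline{\mathcal{S}}_{\lambda}$ and shows it solves \eqref{eq:derivative_lemma_eq}. Uniqueness comes from the fact that the homogeneous operator $r \mapsto \int_{\overline{\mathcal{S}}_{\lambda}} r \, \dint H_{z,\theta}$ is a contraction: since the optimal test has finite expected run-length, the probability of remaining in $\overline{\mathcal{S}}_{\lambda}$ for $n$ consecutive updates tends to zero uniformly, so an iterate of the operator has norm strictly below one and the Neumann series converges, yielding both existence and uniqueness of $r_{\lambda}$.

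The main obstacle I anticipate is the legitimacy of differentiating through the integral while the stopping boundary itself moves with $\lambda$. The regions $\mathcal{S}_{\lambda_i}$ and $\overline{\mathcal{S}}_{\lambda}$ depend on $\lambda$, so a priori the differentiation could generate extra terms from the moving interface. The resolution is the envelope argument above: on $\partial\mathcal{S}_{\lambda}$ the two branches of the minimum are equal, and by Lemma \ref{lm:null_set} this interface is $H_{z,\theta}$-null, so the moving boundary contributes nothing. Making this rigorous requires dominating the difference quotients uniformly in order to justify the interchange of the limit defining the derivative with the integral; here the sublinear bounds of Lemma \ref{lm:rho_bounds} supply the required domination, and securing this uniform control is where the bulk of the technical work lies.
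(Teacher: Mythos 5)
Your proposal follows essentially the same route as the paper's Appendix C: read off the derivative of $g_{\lambda}$ on the stopping region to get the $z_i$ / $0$ cases, differentiate the Bellman recursion through the integral on the continuation region, use the change of measure $z_i\frac{f_{i,\theta}}{f_{\theta}}\,\dint F_{\theta} = z_i\,\dint F_{i,\theta}$ to produce the term $z_i H^i_{z,\theta}(\mathcal{S}_{\lambda_i})$, and justify the interchange by uniform convergence of the differentiated iterates together with the dominating bound $|\rho'_{\lambda_i}|\leq z_i$. The only minor differences are that the paper differentiates an auxiliary linear recursion in which the limiting regions $\mathcal{S}_{\lambda}$, $\overline{\mathcal{S}}_{\lambda}$ are held fixed (identified with $\rho_{\lambda}$ by uniqueness), which sidesteps the moving-boundary issue you flag, and that it obtains existence and uniqueness of $r_{\lambda_i}$ from monotone uniform convergence of the iterates rather than your Neumann-series contraction.
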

A proof of Theorem \ref{th:derivative} can be found in Appendix \ref{apd:proof_derivative}.

Based on Theorem \ref{th:derivative}, the main result of this section can be shown, which is the connection between the cost coefficients $\lambda$
and the error probabilities of the corresponding cost-minimizing sequential test.
\begin{theorem} \label{th:error_prob}
  For $\rho'_{\lambda_i}$, as defined in Theorem \ref{th:derivative}, it is the case that
  \begin{equation*}
    \rho'_{\lambda_i}(z,\theta) = z_i P_{i}[\phi^*_{\lambda,\tau} = 1-i \,|\, z^0 = z, \theta_0 = \theta], \quad i=0,1
  \end{equation*}
  on $E \setminus \partial \mathcal{S}_{\lambda}$ and in particular
  \begin{equation*}
    \rho'_{\lambda_i}(1,1,\theta_0) = \alpha_i(\phi_{\lambda}^*,\psi_{\lambda}^*).
  \end{equation*}
\end{theorem}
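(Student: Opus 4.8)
The plan is to introduce the conditional type-$i$ error probability
\begin{equation*}
  q_i(z,\theta) := P_i[\phi^*_{\lambda,\tau} = 1-i \,|\, z^0 = z, \theta_0 = \theta],
\end{equation*}
and to show that the product $z_i q_i$ reproduces both the boundary values of $\rho'_{\lambda_i}$ on the decision regions and the Fredholm equation \eqref{eq:derivative_lemma_eq} governing $r_{\lambda_i}$ on the continuation region. Since Theorem \ref{th:derivative} guarantees that \eqref{eq:derivative_lemma_eq} has a \emph{unique} solution, this identification forces $\rho'_{\lambda_i} = z_i q_i$ on all of $E \setminus \partial\mathcal{S}_{\lambda}$.

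The computational device is a change of measure linking the two transition families. A single update multiplies the likelihood ratio by $z'_i / z_i = f_{i,\theta}(x)/f_{\theta}(x)$, where $z'_i$ denotes the updated value; this factor is precisely the Radon--Nikodym derivative $\dint F_{i,\theta}/\dint F_{\theta}$, so pushing it through \eqref{eq:definition_H} gives $\dint H^i_{z,\theta}(z',\theta') = (z'_i/z_i)\,\dint H_{z,\theta}(z',\theta')$. First I would record this identity, as it lets any $P_i$-integral be rewritten as a $z'_i$-weighted $P$-integral.

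Next, a first-step analysis under $P_i$ supplies the recursion for $q_i$. On the decision regions the outcome is determined by \eqref{eq:decision_rule}: stopping in $\mathcal{S}_{\lambda_i}$ means deciding for $\mathcal{H}_{1-i}$, hence $q_i = 1$, while stopping in $\mathcal{S}_{\lambda_{1-i}}$ is a correct decision, hence $q_i = 0$; on $\overline{\mathcal{S}}_{\lambda}$ the Markov property yields $q_i(z,\theta) = \int q_i \, \dint H^i_{z,\theta}$. Multiplying by $z_i$, invoking the change-of-measure identity, and splitting the resulting $P$-integral $\int z'_i q_i \, \dint H_{z,\theta}$ across the three regions, the $\mathcal{S}_{\lambda_{1-i}}$ contribution vanishes, the $\mathcal{S}_{\lambda_i}$ contribution collapses to $z_i H^i_{z,\theta}(\mathcal{S}_{\lambda_i})$, and the remainder is $\int_{\overline{\mathcal{S}}_{\lambda}} z'_i q_i \, \dint H_{z,\theta}$. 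Setting $r_{\lambda_i} := z_i q_i$ turns this into exactly \eqref{eq:derivative_lemma_eq}. Evaluating the resulting identity $\rho'_{\lambda_i} = z_i q_i$ at the deterministic initial state $z^0 = (1,1)$, $\theta_0 = \theta_0$ then gives $\rho'_{\lambda_i}(1,1,\theta_0) = q_i(1,1,\theta_0) = \alpha_i(\phi^*_{\lambda},\psi^*_{\lambda})$.

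The step I expect to be most delicate is arguing that $q_i$ is the genuine, unique solution of \eqref{eq:derivative_lemma_eq} rather than merely a formal fixed point. This hinges on the test terminating $P_i$-almost surely, so that the Neumann series of the integral operator on $\overline{\mathcal{S}}_{\lambda}$ converges and carries the probabilistic meaning ``sum over the step at which a trajectory first exits into $\mathcal{S}_{\lambda_i}$''. Lemma \ref{lm:null_set} is also essential here: it ensures $\partial\mathcal{S}_{\lambda}$ is never reached, so that the partition into $\mathcal{S}_{\lambda_i}$, $\mathcal{S}_{\lambda_{1-i}}$, and $\overline{\mathcal{S}}_{\lambda}$ is $P_i$-exhaustive and the stopping decision underlying $q_i$ is almost surely unambiguous.
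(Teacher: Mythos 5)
Your proposal is correct and follows essentially the same route as the paper: the paper likewise sets up the conditional error probability via the Markov property, uses $\dint H^i_{z,\theta} = (z_i'/z_i)\,\dint H_{z,\theta}$ to convert between the two Fredholm equations, fixes the values $1$ and $0$ on $\mathcal{S}_{\lambda_i}$ and $\mathcal{S}_{\lambda_{1-i}}$, and concludes by uniqueness of the solution on $\overline{\mathcal{S}}_{\lambda}$. The only cosmetic difference is direction --- the paper verifies that $\rho'_{\lambda_i}/z_i$ solves the error-probability equation \eqref{eq:fredholm_int_errors}, while you verify that $z_i q_i$ solves \eqref{eq:derivative_lemma_eq} --- which is the same identification read backwards.
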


See Appendix \ref{apd:proof_error_prob} for a proof of Theorem \ref{th:error_prob}.

It is worth mentioning that $(1,1,\theta_0)$ is an element of $\overline{\mathcal{S}}_{\lambda}$ for every meaningful choice of $\lambda$ since
otherwise the optimal strategy would be not to take any samples at all, but to decide on a hypothesis a priori, cf.\ \cite{Novikov2009}. Such a
trivial strategy can indeed be cost minimizing, if $\lambda$ is chosen sufficiently small. However, by only considering target error probabilities
from the open unit interval, i.e., $\gamma_i \in (0,1)$, trivial tests are excluded from the set of possible solutions of the constrained problem.

Theorem \ref{th:error_prob} is central to this work since it directly connects the solution of the optimal stopping problem \eqref{eq:stop_prob} to
the error probabilities of the corresponding sequential test. Exploiting this connection is key to converting the sequential testing problem
\eqref{eq:constr_min} into a linear program.

\section{Sequential Testing as a Linear Program}
\label{sec:linear_program}
In this section we derive, based on the results in Sections \ref{sec:seq_det} and \ref{sec:properties}, the linear form of the sequential hypothesis
testing problem stated in \eqref{eq:constr_min}.

Similar to the approach in Section \ref{sec:seq_det}, the first step to a linear program is to include the constraints in \eqref{eq:constr_min} in the
cost function. However, the cost coefficients here are not chosen a priori, but are instead introduced by directly applying the machinery of
Lagrangian duality to the problem.

The Lagrangian dual problem of \eqref{eq:constr_min} is given by
\begin{equation}
  \max_{\lambda > 0} \; L_{\gamma}(\lambda),
  \label{eq:dual_problem}
\end{equation}
where
\begin{align*}
  L_{\gamma}(\lambda) & = \min_{\psi,\phi} \; \left\{ E[\tau(\psi)] + \lambda_0 (\alpha_0(\psi, \phi) - \gamma_0) + \lambda_1 (\alpha_1(\psi,
      \phi)-\gamma_1) \right\} \\
  & = \min_{\psi,\phi} \left\{ E[\tau(\psi)] + \lambda_0 \alpha_0(\psi, \phi) + \lambda_1 \alpha_1(\psi, \phi) \right\} - \lambda_0 \gamma_0 -
      \lambda_1 \gamma_1 \\
  & = \min_{\psi} V_{\lambda}(\psi) - \lambda_0 \gamma_0 - \lambda_1 \gamma_1 \\
  & = V_{\lambda}(\psi^*) - \lambda_0 \gamma_0 - \lambda_1 \gamma_1 \\
  & = \rho_{\lambda}(1,1,\theta_0) - \lambda_0 \gamma_0 - \lambda_1 \gamma_1.
\end{align*}
$L_{\gamma}$ is concave in $\lambda$ by construction. However, the equivalence between \eqref{eq:dual_problem} and \eqref{eq:constr_min}, i.e., the
absence of a duality gap, is not obvious. It is therefore stated explicitly in the following theorem, which is mostly a corollary of the results
stated in the previous sections.

\begin{theorem} \label{th:no_gap}
  Let $\lambda_{\gamma}^*$ be the solution of \eqref{eq:dual_problem}. It then holds that
  \begin{align*}
    (\psi^*_{\lambda_{\gamma}^*},\phi^*_{\lambda_{\gamma}^*}) = (\psi^*_{\gamma},\phi^*_{\gamma}) \quad \text{and} \quad
    L_{\gamma}(\lambda_{\gamma}^*) = E[\tau(\psi^*_{\gamma})],
  \end{align*}
  i.e., the solution of \eqref{eq:dual_problem} coincides with the solution of \eqref{eq:constr_min}.
\end{theorem}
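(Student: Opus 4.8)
\emph{Proof proposal.} The plan is to combine weak Lagrangian duality with the gradient identity of Theorem \ref{th:error_prob} and the existence of a matching multiplier cited in Section \ref{sec:seq_det}. First I would record weak duality. Fix any $\lambda > 0$. Since $L_\gamma(\lambda)$ is a minimum over \emph{all} pairs $(\psi,\phi)$, it is bounded above by the same objective evaluated at the constrained optimum, so that $L_\gamma(\lambda) \le E[\tau(\psi^*_\gamma)] + \lambda_0(\alpha_0(\psi^*_\gamma,\phi^*_\gamma)-\gamma_0) + \lambda_1(\alpha_1(\psi^*_\gamma,\phi^*_\gamma)-\gamma_1)$. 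Because $(\psi^*_\gamma,\phi^*_\gamma)$ is feasible for \eqref{eq:constr_min}, each bracket is nonpositive and each $\lambda_i \ge 0$, whence $L_\gamma(\lambda) \le E[\tau(\psi^*_\gamma)]$ for every $\lambda$, and therefore $\max_{\lambda>0} L_\gamma(\lambda) \le E[\tau(\psi^*_\gamma)]$. This is the easy inequality.

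The crux is to attain this bound. Since $L_\gamma(\lambda) = \rho_{\lambda}(1,1,\theta_0) - \lambda_0\gamma_0 - \lambda_1\gamma_1$ is concave and, by Theorem \ref{th:error_prob}, differentiable with $\partial L_\gamma/\partial\lambda_i = \rho'_{\lambda_i}(1,1,\theta_0) - \gamma_i = \alpha_i(\psi^*_\lambda,\phi^*_\lambda) - \gamma_i$, a maximizer $\lambda_{\gamma}^*$ in the interior of $\mathbb{R}_+^2$ must satisfy the stationarity condition $\alpha_i(\psi^*_{\lambda_{\gamma}^*},\phi^*_{\lambda_{\gamma}^*}) = \gamma_i$ for $i=0,1$. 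Thus the cost-minimizing test at the dual optimum is feasible and meets both constraints with equality. Substituting these active constraints into $L_\gamma(\lambda_{\gamma}^*) = V_{\lambda_{\gamma}^*}(\psi^*_{\lambda_{\gamma}^*}) - \lambda_{\gamma,0}^*\gamma_0 - \lambda_{\gamma,1}^*\gamma_1$ and expanding $V_{\lambda_{\gamma}^*}(\psi^*_{\lambda_{\gamma}^*}) = E[\tau(\psi^*_{\lambda_{\gamma}^*})] + \lambda_{\gamma,0}^*\gamma_0 + \lambda_{\gamma,1}^*\gamma_1$ collapses the multiplier terms to give $L_\gamma(\lambda_{\gamma}^*) = E[\tau(\psi^*_{\lambda_{\gamma}^*})]$. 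Weak duality then squeezes $E[\tau(\psi^*_{\lambda_{\gamma}^*})]$ between $L_\gamma(\lambda_{\gamma}^*)$ and $E[\tau(\psi^*_\gamma)]$, forcing all three to coincide; hence the dual-optimal test is itself a solution of \eqref{eq:constr_min} and $L_\gamma(\lambda_{\gamma}^*) = E[\tau(\psi^*_\gamma)]$, i.e.\ there is no duality gap.

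Two points need care, and I expect the stationarity step to be the main obstacle. First, I must justify that the maximizer lies in the interior of $\mathbb{R}_+^2$, so that $\nabla L_\gamma = 0$ is legitimate, and that it is attained. This is where $\gamma_i \in (0,1)$ enters, via the remark following Theorem \ref{th:error_prob}: as $\lambda_i \to 0$ the penalty on type-$i$ errors vanishes and $\alpha_i(\psi^*_\lambda,\phi^*_\lambda)$ exceeds $\gamma_i$, so $\partial L_\gamma/\partial\lambda_i > 0$ near that face, excluding a boundary maximum; meanwhile $\alpha_i \to 0 < \gamma_i$ as $\lambda_i \to \infty$ reverses the sign, so concavity together with this coercivity yields an interior critical point where the gradient vanishes. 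Second, to upgrade the equality of expected run-lengths to the claimed identity $(\psi^*_{\lambda_{\gamma}^*},\phi^*_{\lambda_{\gamma}^*}) = (\psi^*_\gamma,\phi^*_\gamma)$, I would invoke almost-sure uniqueness of the optimal test: by Lemma \ref{lm:null_set} the boundary $\partial\mathcal{S}_\lambda$ is a $P$ null set, so the stopping and decision rules are determined $P$-a.s.\ and any two constrained-optimal tests sharing the same run-length must agree. Equivalently, one identifies $\lambda_{\gamma}^*$ with the matching multiplier whose existence is guaranteed in Section \ref{sec:seq_det} following \citet{Novikov2009}, and transports the equality of solutions through that identification.
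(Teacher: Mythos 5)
Your proof is correct and follows essentially the same route as the paper's: stationarity of the concave dual via Theorem \ref{th:error_prob} to force $\alpha_i=\gamma_i$, the limiting behaviour of $\alpha_i$ as $\lambda_i\to 0$ and $\lambda_i\to\infty$ to place the maximizer in the interior, and the algebraic cancellation of the multiplier terms. Your explicit weak-duality bookend and the appeal to Lemma \ref{lm:null_set} for a.s.\ uniqueness of the test pair merely make precise what the paper compresses into ``is of minimum expected run-length by definition of $\rho_{\lambda}$.''
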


See Appendix \ref{apd:proof_no_gap} for a proof.

By \eqref{eq:dual_problem} and Theorem \ref{th:no_gap}, the original problem \eqref{eq:constr_min} is equivalent to the maximization problem
\begin{align} 
  & \max_{\lambda > 0} \; \rho_{\lambda}(1,1,\theta_0) - \lambda_0 \gamma_0 - \lambda_1 \gamma_1 \label{eq:constrained_problem} \\
  \text{s.t.} \quad & \rho_{\lambda}(z,\theta) = \min \left\{ g_{\lambda}(z) \; , \; 1 + \int \rho_{\lambda} \, \dint H_{z,\theta} \right\}. \notag
\end{align}
The simple trick at this point is to relax the equality constraint to an inequality and add $\rho_{\lambda}$ to the set of free variables. It yields
the main Theorem of this work.

\begin{theorem} \label{th:convex_problem}
  Let $\mathcal{L}$ be the set of nonnegative $H_{z,\theta}$ integrable functions on $E$. The problem
  \begin{align}
  & \max_{\lambda > 0, \, \rho \in \mathcal{L}} \quad \rho(1,1,\theta_0) - \lambda_0 \gamma_0 - \lambda_1 \gamma_1
  \label{eq:convex_problem_max} \\
  \text{s.t.} \quad & \rho(z,\theta) \leq \min \left\{ \lambda_0 z_0 \,,\, \lambda_1 z_1 \,,\, 1 + \int \rho \, \dint H_{z,\theta}
    \right\} \quad \forall (z,\theta) \in E \notag
  \end{align}
  is equivalent to problem \eqref{eq:constr_min}. More precisely, 
  \begin{equation*}
    E[\tau(\psi_{\gamma}^*)] = \rho^*(1,1,\theta_0)- \lambda^*_0 \gamma_0 - \lambda^*_1 \gamma_1, \quad \psi_{\gamma,n}^* =
\boldsymbol{1}_{\left\{g_{\lambda^*}(z^n) = \rho^*(z^n,\theta_n)
      \right\}} \quad \text{and} \quad \phi_{\gamma,n}^* = \boldsymbol{1}_{\left\{\lambda_0^* z_0^n \leq \lambda_1^* z_1^n\right\}},
  \end{equation*}
  where $\lambda^*$ and $\rho^*$ solve \eqref{eq:convex_problem_max}.
\end{theorem}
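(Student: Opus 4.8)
The plan is to show that relaxing the Bellman equality in \eqref{eq:constrained_problem} to the inequality constraint of \eqref{eq:convex_problem_max}, while promoting $\rho$ to a free variable, changes neither the optimal value nor the induced test. Since $\rho_{\lambda}$ solves \eqref{eq:Wald_Bellman_explicit} exactly, the pair $(\lambda,\rho_{\lambda})$ is feasible for \eqref{eq:convex_problem_max} for every $\lambda>0$, so the relaxed problem has value at least that of \eqref{eq:constrained_problem}, which by Theorem \ref{th:no_gap} equals $E[\tau(\psi_{\gamma}^*)]$. It therefore suffices to prove the reverse inequality, and the entire argument reduces to the pointwise bound that every feasible $\rho$ lies below $\rho_{\lambda}$.

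To establish this bound I would use the operator $T$ from Lemma \ref{lm:uniform_convergence}, for which $T(\rho)(z,\theta)=\min\{g_{\lambda}(z),\,1+\int\rho\,\dint H_{z,\theta}\}$, so that the constraint in \eqref{eq:convex_problem_max} reads precisely $\rho\leq T(\rho)$. First I would note that $T$ is monotone: $\rho_1\leq\rho_2$ implies $\int\rho_1\,\dint H_{z,\theta}\leq\int\rho_2\,\dint H_{z,\theta}$ and hence $T(\rho_1)\leq T(\rho_2)$. Since $\rho\leq T(\rho)\leq g_{\lambda}=T^0(g_{\lambda})$, an induction using monotonicity gives $\rho\leq T^{n}(g_{\lambda})$ for all $n\geq 0$: assuming $\rho\leq T^{n}(g_{\lambda})$, monotonicity yields $T(\rho)\leq T^{n+1}(g_{\lambda})$, and combining with $\rho\leq T(\rho)$ gives $\rho\leq T^{n+1}(g_{\lambda})$. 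Lemma \ref{lm:uniform_convergence} identifies the uniform limit of $T^{n}(g_{\lambda})$ as $\rho_{\lambda}$, so letting $n\to\infty$ produces $\rho\leq\rho_{\lambda}$ on all of $E$.

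With this bound any feasible $(\lambda,\rho)$ satisfies $\rho(1,1,\theta_0)-\lambda_0\gamma_0-\lambda_1\gamma_1\leq\rho_{\lambda}(1,1,\theta_0)-\lambda_0\gamma_0-\lambda_1\gamma_1=L_{\gamma}(\lambda)\leq L_{\gamma}(\lambda_{\gamma}^*)$, so the relaxed optimum cannot exceed $L_{\gamma}(\lambda_{\gamma}^*)=E[\tau(\psi_{\gamma}^*)]$. Together with the feasibility of $(\lambda_{\gamma}^*,\rho_{\lambda_{\gamma}^*})$ this pins the value of \eqref{eq:convex_problem_max} to $E[\tau(\psi_{\gamma}^*)]$, giving the first claimed identity. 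It also forces any maximizer $(\lambda^*,\rho^*)$ to attain the chain of inequalities with equality, so $L_{\gamma}(\lambda^*)=L_{\gamma}(\lambda_{\gamma}^*)$ and $\rho^*(1,1,\theta_0)=\rho_{\lambda^*}(1,1,\theta_0)$. By uniqueness of the dual optimizer from Theorem \ref{th:no_gap}, $\lambda^*=\lambda_{\gamma}^*$, whence the decision rule $\phi_{\gamma,n}^*$ takes the stated likelihood-ratio form through \eqref{eq:decision_rule}.

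The main obstacle is upgrading the single-point equality $\rho^*(1,1,\theta_0)=\rho_{\lambda^*}(1,1,\theta_0)$ to $\rho^*=\rho_{\lambda^*}$ on the states the optimal test actually visits, which is what the stopping-rule formula requires. Here I would exploit that $(1,1,\theta_0)$ lies in the continuation region $\overline{\mathcal{S}}_{\lambda^*}$ (noted after Theorem \ref{th:error_prob}), so $\rho_{\lambda^*}(1,1,\theta_0)=1+\int\rho_{\lambda^*}\,\dint H_{(1,1,\theta_0)}$. Sandwiching the constraint $\rho^*(1,1,\theta_0)\leq 1+\int\rho^*\,\dint H_{(1,1,\theta_0)}$ between this identity and the bound $\rho^*\leq\rho_{\lambda^*}$ forces $\int(\rho_{\lambda^*}-\rho^*)\,\dint H_{(1,1,\theta_0)}=0$, hence $\rho^*=\rho_{\lambda^*}$ holds $H_{(1,1,\theta_0)}$-almost everywhere. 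Iterating this sandwich along the continuation region propagates the equality $H$-almost surely to every reachable state, so $\boldsymbol{1}_{\{g_{\lambda^*}(z^n)=\rho^*(z^n,\theta_n)\}}$ agrees almost surely with $\boldsymbol{1}_{\{g_{\lambda^*}(z^n)=\rho_{\lambda^*}(z^n,\theta_n)\}}$. By Corollary \ref{cl:stopping_region} and Lemma \ref{lm:null_set} the latter is precisely $\psi_{\lambda^*,n}^*=\psi_{\gamma,n}^*$, completing the identification of the test.
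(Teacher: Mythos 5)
Your proof is correct and rests on the same core mechanism as the paper's: monotonicity of the Bellman operator $T$ forces every feasible $\rho$ to lie below the fixed point $\rho_{\lambda}$, so that maximizing $\rho(1,1,\theta_0)$ recovers the non-relaxed problem. The execution, however, differs in two useful ways. Where the paper iterates $T$ \emph{upward} from the relaxed optimizer $\tilde{\rho}$ and appeals somewhat informally to convergence of that increasing sequence to the unique fixed point, you squeeze every feasible $\rho$ from \emph{above} against the already-established decreasing sequence $T^n(g_{\lambda})\downarrow\rho_{\lambda}$ of Theorem \ref{th:optimal_stopping} and Lemma \ref{lm:uniform_convergence}; this avoids a separate convergence argument for the upward iteration and immediately pins the optimal value to $L_{\gamma}(\lambda_{\gamma}^*)=E[\tau(\psi_{\gamma}^*)]$ through the dual, a bookkeeping step the paper leaves implicit (it asserts $\tilde{\rho}(1,1,\theta_0)=\rho^*(1,1,\theta_0)$ without first showing the two problems share the same value). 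Second, the paper merely asserts that $\tilde{\rho}$ and $\rho^*$ ``must differ only on a $P$ null set''; your sandwich argument at $(1,1,\theta_0)$, propagated along the continuation region via $H_{z,\theta}$, actually proves this, which is exactly what is needed to identify $\boldsymbol{1}_{\{g_{\lambda^*}(z^n)=\rho^*(z^n,\theta_n)\}}$ with $\psi^*_{\lambda^*,n}$ almost surely (together with Lemma \ref{lm:null_set} for the boundary). One small quibble: you invoke ``uniqueness of the dual optimizer from Theorem \ref{th:no_gap},'' which that theorem does not establish; but you do not need it, since your equality chain already shows that any $\lambda^*$ optimal for \eqref{eq:convex_problem_max} maximizes $L_{\gamma}$, and Theorem \ref{th:no_gap} then applies to it directly.
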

\begin{proof}
  Qualitatively speaking, the validity of the relaxation in \eqref{eq:convex_problem_max} follows from the fact that every $\rho(z,\theta)$ is a
  nondecreasing function in $\rho(B)$ for all $B \in \mathcal{E}$. Therefore, maximizing $\rho$ at one point implies maximizing $\rho$ over the
  entire state space.

  To formalize this, let $\rho^*$ be the solution of \eqref{eq:constrained_problem} and $\tilde{\rho}$ be the solution of the corresponding relaxed
  problem \eqref{eq:convex_problem_max}. 
  Since $\rho^*$ is unique, $\tilde{\rho} = \rho^*$ whenever $\tilde{\rho}$ fulfills the relaxed constraint with equality. Hence, only the case when
  equality does not hold needs closer inspection. In this case, a function $\tilde{\rho}+\Delta \rho$ with
  \begin{equation*}
    \Delta \rho = \min \left\{ g_{\lambda^*} \; , \; 1 + \int \tilde{\rho} \, \dint H_{z,\theta} \right\} - \tilde{\rho} \geq 0
  \end{equation*}
  can be constructed, without changing $\lambda^*$, that still fulfills the inequality constraint, but dominates $\tilde{\rho}$. This procedure can be
  repeated to create a nondecreasing sequence of functions that converges to a solution of the non-relaxed problem \eqref{eq:constrained_problem}.
  Since $\tilde{\rho}$ is assumed to be optimal, this means that $\tilde{\rho} \leq \rho^*$, but $\tilde{\rho}(1,1,\theta_0) = \rho^*(1,1,\theta_0)$.
  For this to hold, $\tilde{\rho}$ and $\rho^*$ must differ only on a $P$ null set. The associated stopping rules are hence equivalent in an almost
  sure sense.
\end{proof}

\subsection{Discussion}
Problem \eqref{eq:convex_problem_max} can be written as a generic linear program by splitting the minimum-constraint into three linear inequality
constraints. However, since it involves the optimization over a continuous function, it falls in the class of infinite-dimensional optimization
problems. The solution methods for this kind of problem range from classic calculus of variations \citep{Gelfand2003} to general numerical approaches
\citep{Schochetman2001, Devolder2010} and approaches customized for linear problems \citep{Ito2009}. However, a detailed analysis of
infinite-dimensional optimization techniques is beyond the scope of this work. For the examples presented in Section \ref{sec:examples}, a
straightforward discretization of the problem proved sufficient.

The result of the optimization are optimal cost coefficients $\lambda^*$ and the corresponding cost functions $g_{\lambda^*}$ and $\rho_{\lambda^*}$.
The maximum value of the objective function, i.e., $\rho_{\lambda^*}(1,1,\theta_0)- \lambda_0^* \gamma_0 - \lambda_1^* \gamma_1$, corresponds to the
expected number of samples of the test. The optimal stopping rule is to continue the test as long as $\rho_{\lambda^*}(z^n,\theta_n) <
g_{\lambda^*}(z^n)$ and to stop if $\rho_{\lambda^*}(z^n,\theta_n) = g_{\lambda^*}(z^n)$. Calculating the boundary $\partial \mathcal{S}_{\lambda}$
explicitly is in general not necessary, but can be done to reduce the amount of storage and to compare the optimal test to constant threshold
tests---see Section \ref{sec:examples}.

When solving problem \eqref{eq:convex_problem_max} numerically, it can be the case that on some region $B \subset E$  the inequality constraint is
not fulfilled with equality, even though $B$ is not a $P$ null set. This effect is due to numerical inaccuracies and occurs when the coupling between
$\rho(1,1,\theta_0)$ and $\rho(B)$ is so weak that the contribution of $B$ to $\rho(1,1,\theta_0)$ is smaller than the precision of the solver. As
a result, the stopping region can exhibit some areas, where the cost for continuing is erroneously declared to be smaller than that for stopping.
However, given a reasonable precise solver, these artifacts occur only in regions of the state space that are highly unlikely to ever be reached
during a test and usually are a purely cosmetic problem. In any case, the procedure given in the proof of Theorem \ref{th:convex_problem} can be used
to construct a valid solution from the inaccurate one. Alternatively, a regularization term can be added to the maximization that explicitly enforces
equality---see Appendix \ref{apd:enforce_equality} for details.

An advantage of the LP design approach is that it does not require additional performance analysis. The expected run-length is already a result of
the optimization and the required error probabilities are met exactly, in theory, or within the accuracy of the numerical solver, in practice. This is
in contrast to many design techniques that are two-step procedures: First, a stopping rule or threshold is determined, based on upper bounds on the
error probabilities. Second, the performance of a test using this stopping rule is analyzed. While such approaches work well in the i.i.d.\ case or
the asymptotic case, where limiting distributions and large sample number approximations can be used, they become increasingly involved and inaccurate
for the case of correlated observations or moderate target error probabilities. Hence, the design of sequential tests under such scenarios often
relies on Monte Carlo simulations \citep{Tartakovsky2003} or resampling methods \citep{Sochman2005} to determine the true error probabilities. 

The use of classic numerical optimization methods for the design of sequential tests is uncommon, perhaps because of the hidden nature of the
linearity. The problem is indeed highly nonlinear in the obvious optimization parameters such as the stopping rule or the likelihood ratio
thresholds. More widespread is the practice of evaluating the performance of a given stopping rule by solving the Fredholm equations
\eqref{eq:fredholm_int_errors} numerically \citep{Tartakovsky2014}. However, for the purpose of designing sequential tests, this approach has its
limits since a reasonable estimate of the stopping region has to be known beforehand. While this is unproblematic in the i.i.d.\ case,
constructing stopping regions by hand becomes extremely challenging for the case of correlated observations. The relative simplicity of the linear
programming approach, in contrast, is achieved by avoiding a direct calculation of the stopping region all together. In this sense, it is remarkable
that the boundary manifolds that solve the Fredholm integral equations in \eqref{eq:fredholm_int_errors} can be obtained implicitly by solving
\eqref{eq:convex_problem_max}, whereas performing an explicit optimization with respect to $\partial \mathcal{S}$ is a formidable task.

\section{Examples and Numerical Results}
\label{sec:examples}
Three example problems are solved in this section to illustrate the proposed LP approach to sequential detection. The basic task in all of them is to
test for a shift in the mean of a Gaussian random variable. The dependency structures, however, are chosen increasingly complex. A simple i.i.d.\
model is considered first, followed by an observable Markov chain with two states. Finally, the sequential testing problem for the Gaussian AR(1)
process is solved. To the best of our knowledge, the optimal solutions for the two latter models have not been given in the literature before.

The sequential tests in this section are all designed to minimize the run-length under the null hypothesis. The main reason for this choice is that
$z_0 = 1$ for $P = P_0$ so that the optimal test can be performed on $z_1$ only. This significantly simplifies the plots and reduces the technical
difficulties. In addition, minimizing the run-length under a particular hypotheses is a task often encountered in practical problems, like hazard or
fault detection. For notational convenience $z$ is used instead of $z_1$.

The only input to the linear program that is not chosen by the test designer is the family of measures $H_{z,\theta}$. In the following it is
represented by a kernel function $h(z',\theta' \,;\, z,\theta)$ satisfying
\begin{equation*}
  \int_{B} h(z',\theta' \,;\, z,\theta) \, \dint z' \dint \theta' = H_{z,\theta}(B) \quad \forall B \in \mathcal{E}.
\end{equation*}
Note that determining $h$ is the only non-generic step in the test design and therefore the most likely source of errors.

In order to numerically solve problem \eqref{eq:convex_problem_max} all continuous quantities are discretized, including the kernel of the integral
transformation. This corresponds to solving the problem on a grid with a finite number, $M$, of points. The discretized problem in its generic form
reads
\begin{equation}
  \max_{\lambda \in \mathbb{R}_+^2, \, \boldsymbol{\rho} \in \mathbb{R}_+^M} \quad \rho_n - \lambda_0 \gamma_0 - \lambda_1
    \gamma_1 \quad \text{s.t.} \quad \boldsymbol{\rho} \leq \min \left\{ \lambda_0 \boldsymbol{z}_0 \, , \, \lambda_1 \boldsymbol{z}_1 \, , \, 1 +
    \boldsymbol{\rho} \boldsymbol{H} \right\},
\end{equation}
where $\boldsymbol{\rho}, \boldsymbol{z}$ are row vectors of size $M$ and $\boldsymbol{H}$ is a matrix of size $M\times M$ that corresponds to the
integral kernel. The index $n$ is chosen such that $\rho_n$ corresponds to $\rho(1,1,\theta_0)$. For the problems presented here, this straightforward
sampling approach is sufficiently accurate and computationally efficient.

For better numerical stability, however, it is highly recommendable to perform some kind of pre-warping or to use a nonlinear sampling function since
the likelihood ratio values require different sampling granularities on different intervals. For our experiments, we used
\begin{equation*}
  t_{\beta}(z) = \frac{1}{1+z^{-\beta}},
\end{equation*}
where $t: \mathbb{R}_+ \to (0,1]$ maps the positive reals onto the unit interval and $\beta > 0$ can be chosen freely. This mapping can be interpreted
as the concatenation of a logarithmic transform and a logistic transform.

The results in this section are presented in terms of log-likelihood ratio thresholds. Following the majority of the literature, the upper
threshold is denoted $A$ and the lower threshold $B$. In general, the optimal thresholds are functions of the past observations, i.e., $A = A(\theta)$
and $B = B(\theta)$. For the sake of a more compact notation, the cost function associated with continuing the test is denoted
\begin{equation*}
  d_{\lambda}(z,\theta) := 1 + \int \rho_{\lambda} \, \dint H_{z,\theta}.
\end{equation*}
The equivalent cost function for stopping the test is $g_{\lambda}(z)$.

As a reference for comparison with the optimal results, tests are used whose thresholds are calculated according to Wald's approximation, which is
given by
\begin{equation}
  A \approx \log \frac{\alpha_1}{1-\alpha_0}, \quad B \approx \log \frac{\alpha_0}{1-\alpha_1}.
  \label{eq:wald_aprx}
\end{equation}
These approximations are independent of the distributions underpinning the test and are most widely used in practice---irrespective of the fact that
improvements have been suggested throughout the years \citep{Page1954, Tallis1965}. It is further shown in \cite{Wald1948} that the approximations
\eqref{eq:wald_aprx} are asymptotically optimal as $\max\{\gamma_0,\gamma_1\} \to 0$ .

Finally, in this section the quantities marked with a tilde have been obtained by means of Monte Carlo simulations. In all of the experiments,
$10^5$ Monte Carlo runs of the respective sequential tests were performed.

\subsection{Mean Shifted Gaussian IID}
The classic problem of ``testing that the mean of a normal distribution with known standard deviation falls short of a given value'' \citep{Wald1947}
is a good example to introduce the LP approach to the design of sequential tests. It can equivalently be formulated as
\begin{align*}
  \mathcal{H}_0: & \quad X_n \sim f_{\mathcal{N}}(x_n \,;\, 0, \sigma), \\
  \mathcal{H}_1: & \quad X_n \sim f_{\mathcal{N}}(x_n \,;\, \mu, \sigma),
\end{align*}
where $f_{\mathcal{N}}(\cdot \,;\, \mu, \sigma)$ denotes the Gaussian probability density function with mean $\mu$ and standard deviation $\sigma$.
All $X_n$ are assumed to be i.i.d.\ under both hypotheses.

Since every observation is independent of the past, $E_{\theta} = \emptyset$. Under $P =P_0$, the log-likelihood ratio $s = \log(z)$ follows a
Gaussian distribution with $\mu_s = -\mu^2/2\sigma^2$ and $\sigma_s = \mu/\sigma$ such that the kernel $h^0$ in terms of $s$ is given by
\begin{equation}
  h^0_{\text{iid}}(s',s \,;\, \mu, \sigma) = f_{\mathcal{N}}(s'-s \,;\, \mu_s, \sigma_s) = \frac{\sigma}{\sqrt{2\pi} \mu} \exp\left(-
    \frac{\sigma^2}{\mu^2}(s'-s) - \frac{1}{2} \right).
  \label{eq:iid_kernel}
\end{equation}

To obtain an expression in $z$, one can simply substitute $s = \log(z)$ in \eqref{eq:iid_kernel}. Alternatively, problem \eqref{eq:convex_problem_max}
can be formulated directly in terms of the log-likelihood ratio as
\begin{align*}
  & \max_{\lambda > 0, \, \rho > 0} \quad \rho(0,0,\theta_0) - \lambda_0 \gamma_0 - \lambda_1 \gamma_1 \\
  \text{s.t.} \quad & \rho(s,\theta) \leq \min \left\{ \lambda_0 e^{s_0} \,,\, \lambda_1 e^{s_1} \,,\, 1 + \int \rho \, \dint H_{s,\theta}
  \right\}
\end{align*}
Expressions in $t_{\beta}(z)$, or any other bijectively transformed version of $z$, can be analogously stated.

For the experiments the parameters $\mu = \sigma = 1$ and $t_{0.5}(z)$ was sampled at 200 equally spaced points. The kernel matrix
$\boldsymbol{H}^0_{\text{iid}}$ is accordingly of dimensions $200 \times 200$. Problems of this size are solved within seconds by state-of-the-art LP
solvers, which makes the algorithm very attractive for the design of sequential tests between two i.i.d.\ sequences.

\begin{table}[t]
  \centering
  \includegraphics{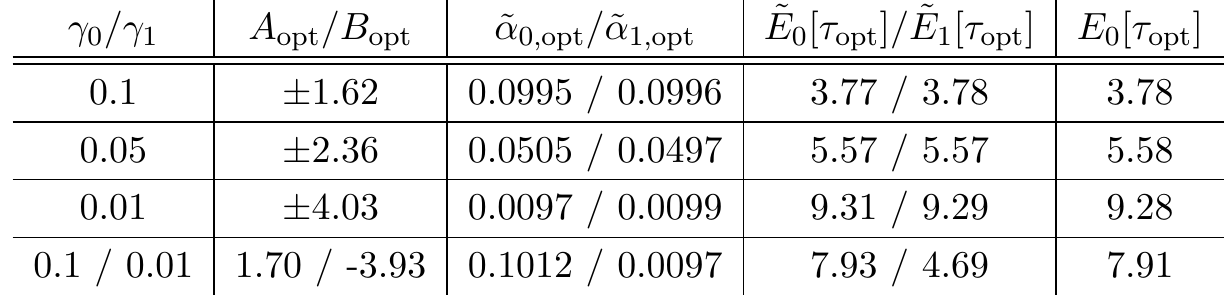}
  \caption{Mean Shifted Gaussian IID: Optimal log-likelihood ratio thresholds $A$, $B$, empirical error probabilities $\tilde{\alpha}_i$, and
           average and expected run-length $\tau$ for target error probabilities $\gamma$}
  \label{tb:iid_opt}
  \includegraphics{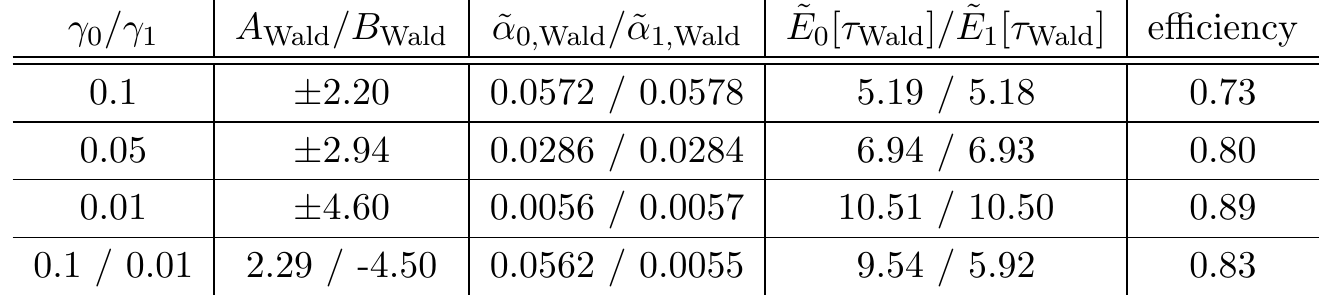}
  \caption{Mean Shifted Gaussian IID: Log-likelihood ratio thresholds $A$, $B$, empirical error probabilities $\tilde{\alpha}_i$ and average
           run-length $\tau$ for target error probabilities $\gamma$ using Wald's approximations. The last column gives the relative loss in the
           average run-length compared to the optimal test, i.e., $\tilde{E}_0[\tau_{\text{opt}}]$/$\tilde{E}_0[\tau_{\text{Wald}}]$}
  \label{tb:iid_wald}
\end{table}

From Table \ref{tb:iid_opt} and \ref{tb:iid_wald} it can be seen that the optimal sequential test can perform significantly better than the test using
Wald's approximations. In particular, in cases where large overshoots over the threshold can be expected, i.e., for large error probabilities, the
average run-length is reduced by up to 25\%. For smaller error probabilities the improvement is less pronounced, as was expected.

How the optimal thresholds can be obtained from the results of the LP problem is illustrated in Figure~\ref{fig:iid_cost}. Here the costs for stopping
and continuing the test are plotted as functions of the likelihood ratio. The points of intersection correspond to the thresholds. It is noteworthy
that even if the target error probabilities are chosen to be identical, the values of the optimal cost coefficients differ significantly. This
difference can be explained as follows: Since the run-length is minimized under the null hypothesis, the likelihood ratio sequence admits a permanent
drift towards the lower threshold. Choosing the latter closer to zero significantly reduces the run-length at the cost of an increased probability of
second type errors. The probability of first type errors, by contrast, is mainly determined by the upper threshold, which has very little influence on
the run-length under $P_0$. Consequently, first type errors have to be penalized much higher than second type errors if both are supposed to occur
with the same probability. This asymmetry highlights problems with approaches that assume the cost coefficients to be given a priori or simply assume
both error types to be equally costly.

\begin{figure}[!t]
  \centering
  \includegraphics{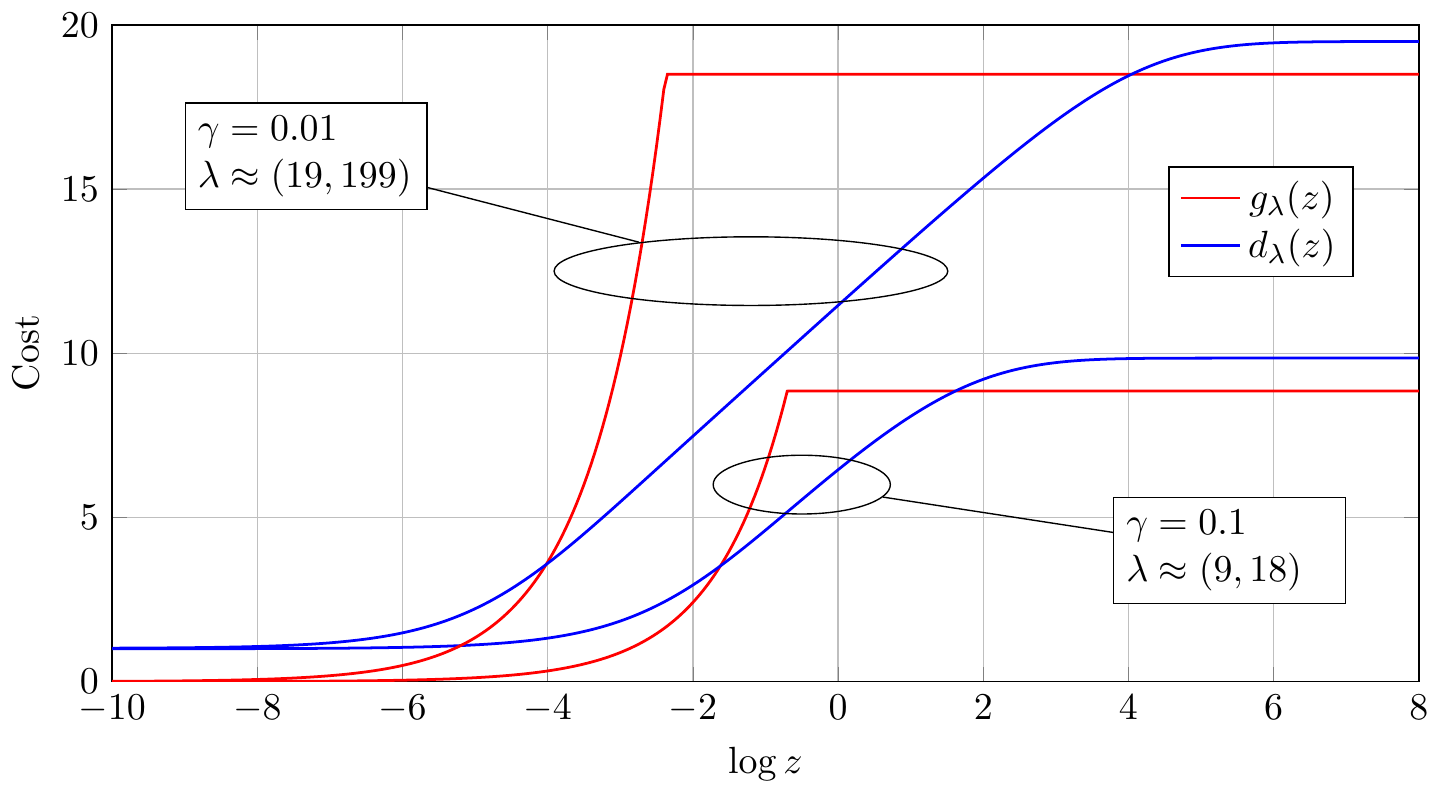}
  \caption{Mean Shifted Gaussian IID: Cost functions for optimal tests with error probabilities $\gamma = 0.1$ and $\gamma = 0.01$.}
  \label{fig:iid_cost}
\end{figure}

\subsection{Observable Markov Chain}
The above example can be complicated by assuming that the observed random sequence is governed by an observable Markov chain with
state space $E_{\theta} = \{1,2\}$. More precisely,
\begin{align*}
  \mathcal{H}_0: & \quad X_n := (Y_n,\Theta_n) \sim p_0(\theta_n) \cdot f_{\mathcal{N}}(y_n \,;\, 0, \sigma) \\
  \mathcal{H}_1: & \quad X_n := (Y_n,\Theta_n) \sim p_1(\theta_n | \theta_{n-1}) \cdot f_{\mathcal{N}}(y_n \,;\, \theta_n/2, \sigma),
\end{align*}
where $p_i$ defines the (transition) probabilities of the states. In this model, $Y_n$ and $\Theta_n$ are independent Gaussian and Bernoulli random
variables under the null hypothesis, while under the alternative hypothesis the distribution of $Y_n$ depends on the current state $\theta_n$. The
initial state is assumed to be $\theta_0 = 1$. Apparently, $\theta_{n-1}$ is a sufficient statistic for the distribution of $X_n$, conditioned on the
previous observations. The integral kernel under the null hypothesis can be shown to be a scaled and shifted version of \eqref{eq:iid_kernel}, namely
\begin{equation*}
  h^0_{\text{MC}}(s',\theta',s,\theta \,;\, \sigma) = p_0(\theta') \cdot f_{\mathcal{N}}(s'-s \,;\, \mu_{\text{MC}},\sigma_{\text{MC}}),
\end{equation*}
where
\begin{equation*}
  \mu_{\text{MC}} = \mu_{\text{MC}}(\theta',\theta) = -\frac{1}{8}\frac{\theta'^2}{\sigma^2} + \log \frac{p_1(\theta'|\theta)}{p_0(\theta')} \quad
\text{and} \quad \sigma_{\text{MC}} = \sigma_{\text{MC}}(\theta') = \frac{1}{2} \frac{\theta'}{\sigma}.
\end{equation*}

For the numerical results $\sigma = 1$ is assumed and the transition probabilities under $\mathcal{H}_1$ are chosen symmetrically as $p(\theta' \,|\,
\theta) = 0.8$ for $\theta' = \theta$ and $p(\theta' \,|\, \theta) = 0.2$ for $\theta' \neq \theta$. Under $\mathcal{H}_0$ $p_0(1) = p_0(2) =
0.5$ is used. Again, $t_{0.5}(z)$ was sampled at 200 points. However, since $\rho_{\lambda}$ is now defined on $\mathbb{R}_+ \times \{1,2\}$, the
stacked vector $\boldsymbol{\rho} = (\boldsymbol{\rho}(\boldsymbol{z},1), \boldsymbol{\rho}(\boldsymbol{z},2))$ is of size $400$ and the matrix
$\boldsymbol{H}^0_{\text{MC}}$ of size $400 \times 400$. The runtime of the solver is not significantly affected by this rise in complexity.

\begin{table}[!t]
  \centering
  \includegraphics{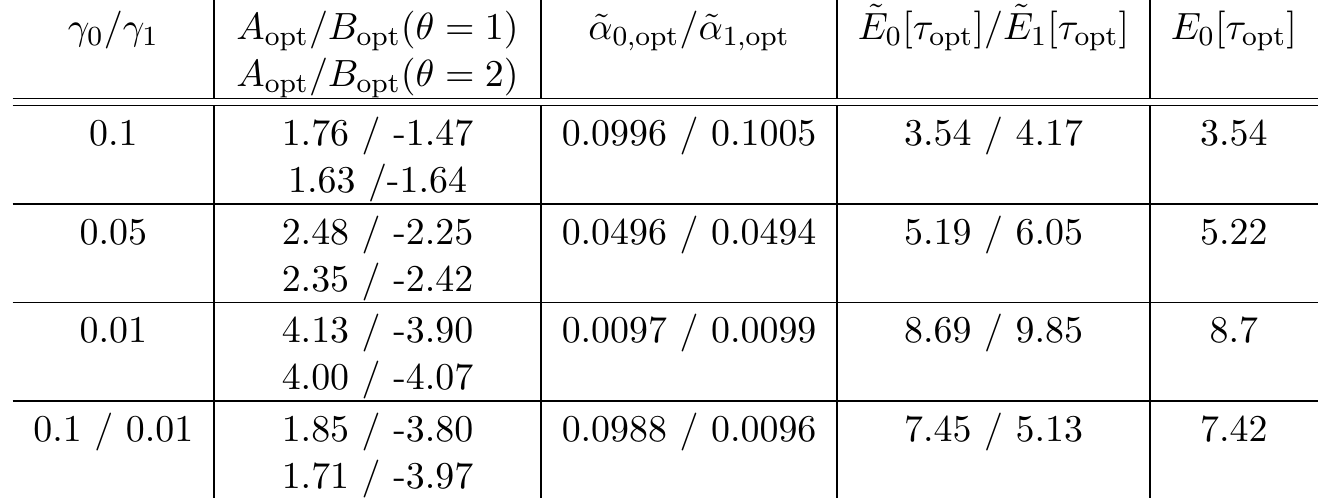}
  \caption{Observable Markov Chain: Optimal log-likelihood ratio thresholds $A(\theta)$, $B(\theta)$, empirical error probabilities
           $\tilde{\alpha}_i$, and average and expected run-length $\tau$ for target error probabilities $\gamma$}
  \label{tb:mc_opt}
  \includegraphics{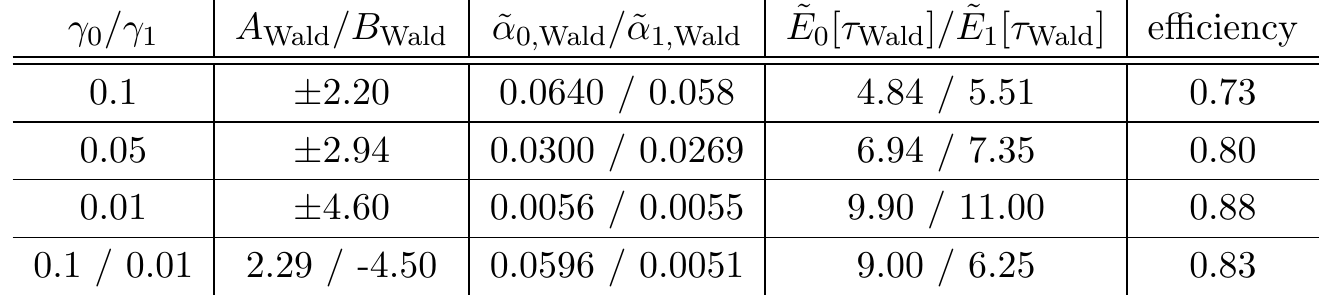}
  \caption{Observable Markov Chain: Log-likelihood ratio thresholds $A$, $B$, empirical error probabilities $\tilde{\alpha}_i$, and
           average run-length $\tau$ for target error probabilities $\gamma$ using Wald's approximations. The last column gives the relative loss in
           the average run-length compared to the optimal test, i.e., $\tilde{E}_0[\tau_{\text{opt}}]$/$\tilde{E}_0[\tau_{\text{Wald}}]$}
  \label{tb:mc_wald}
\end{table}

The results of the optimal test are given in Table \ref{tb:mc_opt} and Figure \ref{fig:mc_cost}. The expected run-length and error probabilities of a
test using Wald's approximations are shown in Table \ref{tb:mc_wald}. The results do not differ much from the i.i.d.\ scenario in terms of the
efficiency of Wald's test. The reduction in samples by using the optimal strategy is still between 25\% and 10\%. However, to achieve this reduction,
the likelihood ratio alone is no longer a sufficient test statistic since different thresholds have to be used in different states---see Figure
\ref{fig:mc_cost}. In line with the asymptotic optimality of Wald's approximations, the difference between the thresholds in the two states reduces
with decreasing error probabilities.

\begin{figure}[!t]
  \centering
  \includegraphics{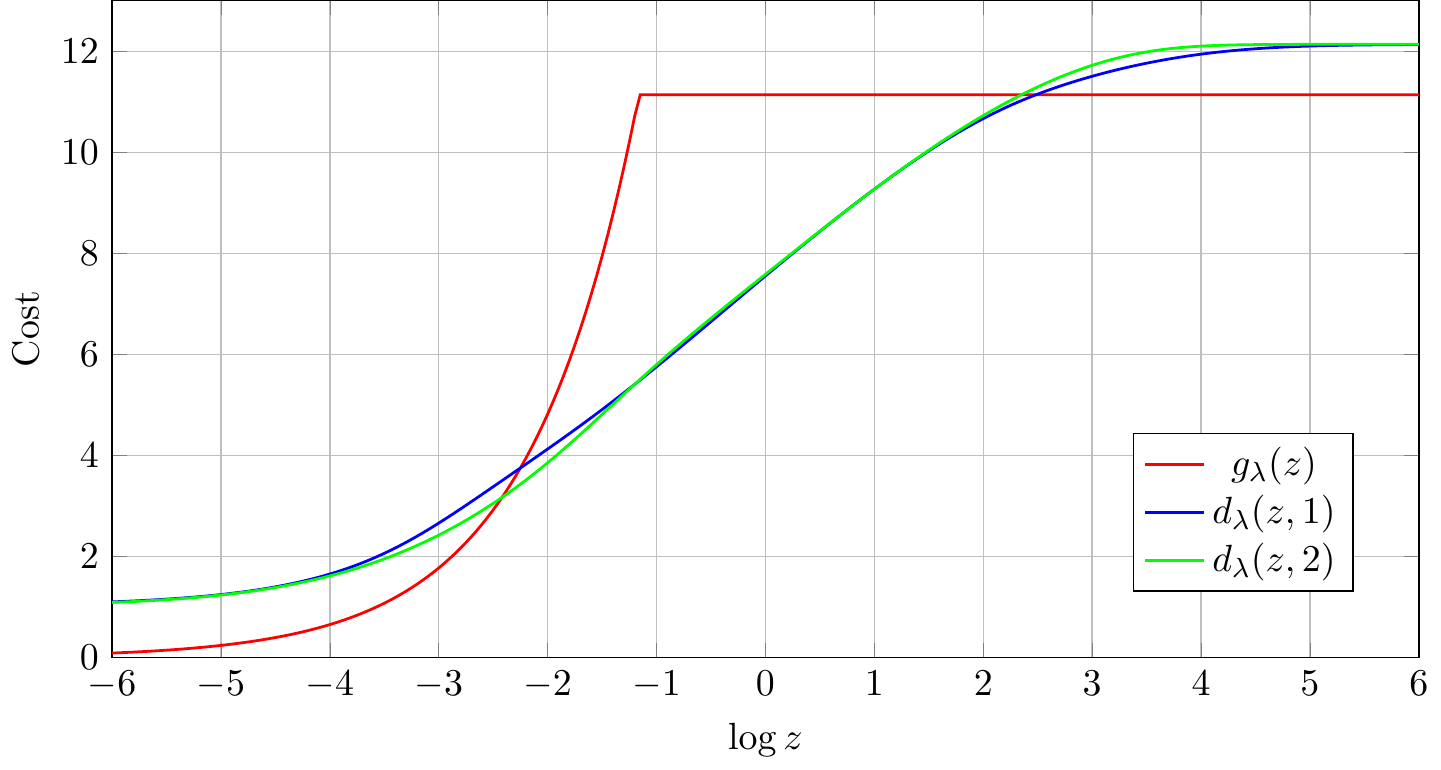}
  \caption{Observable Markov Chain: State dependent cost functions for an optimal test with error probabilities $\gamma = 0.05$.}
  \label{fig:mc_cost}
\end{figure}

\subsection{Gaussian AR(1) Process}
The final example is the Gaussian AR(1) process. In \cite{Novikov2009} it is shown that the optimal stopping strategy for this process is a
function of the likelihood ratio and the current observation. However, to the best of our knowledge, the exact strategy has never been derived, let
alone implemented.

The two hypotheses are given by
\begin{align*}
  \mathcal{H}_0: & \quad X_n = a_0 X_{n-1} + \epsilon_n \\
  \mathcal{H}_1: & \quad X_n = a_1 X_{n-1} + \epsilon_n,
\end{align*}
where $(\epsilon_n)_{n\geq1}$ is a sequence of i.i.d.\ zero mean Gaussian random variables with standard deviation $\sigma$. Since knowledge of
$x_{n-1}$ is sufficient to describe the conditional distribution of $X_n$, $\theta_{n-1} = x_{n-1}$ is chosen. The log-likelihood ratio increment
$\Delta s_n$ of the single observation $x_n$ is given by
\begin{equation*}
  \Delta s_n = \frac{(a_1-a_0)x_{n-1}}{\sigma^2} x_n - \frac{(a_1^2-a_0^2) x_{n-1}^2}{2\sigma^2}.
\end{equation*}
The joint distribution of $(S_n,\Theta_n)$, given $(s_{n-1},\theta_{n-1})$, is nonzero only on the one-dimensional manifold
\begin{equation}
  x_n = \frac{\sigma^2}{x_{n-1}}\frac{s_n -s_{n-1}}{a_1-a_0} + \frac{x_{n-1}}{2} (a_1+a_0).
  \label{eq:AR1_manifold}
\end{equation}
Let the set of $x_n$ that satisfy \eqref{eq:AR1_manifold} be denoted $\mathcal{X}(s_n,s_{n-1},x_{n-1}) \subset \mathbb{R}$. Using this notation, the
integral kernel under $\mathcal{H}_i$, $i=0,1$ is
\begin{equation*}
  h^i_{\text{AR1}}(s',\theta',s,\theta \,;\, a_0, a_1, \sigma) = \begin{cases}
                                                                    f_{\mathcal{N}}(\theta' \,;\, a_i \theta,\sigma), & \theta' \in
                                                                    \mathcal{X}(s',s,\theta) \\
                                                                    0, & \text{otherwise}.
                                                                 \end{cases}
\end{equation*}
For the experiment, $\sigma = 1$, $a_0 = 0$ and $a_1 = 1$ are chosen, which corresponds to testing between an AR(1) process and Gaussian noise. In
contrast to the previous examples, there now are two continuous quantities to discretize. Again, $200$ sampling points are used for each, i.e., the
log-likelihood ratio and the current observation $x_n$. Consequently, the vector $\boldsymbol{\rho}$ is of size $4\cdot10^4$ and
$\boldsymbol{H}_{\text{AR1}}$ of size $4\cdot10^4 \times 4\cdot10^4$. Problems of this size can still be handled by state of the art hard- and
software, especially since $\boldsymbol{H}_{\text{AR1}}$ is exceedingly sparse, but the limitations of the proposed method start to show. For more
complex dependency structures, more advanced solution methods have to be used.

\begin{table}[!t]
  \centering
  \includegraphics{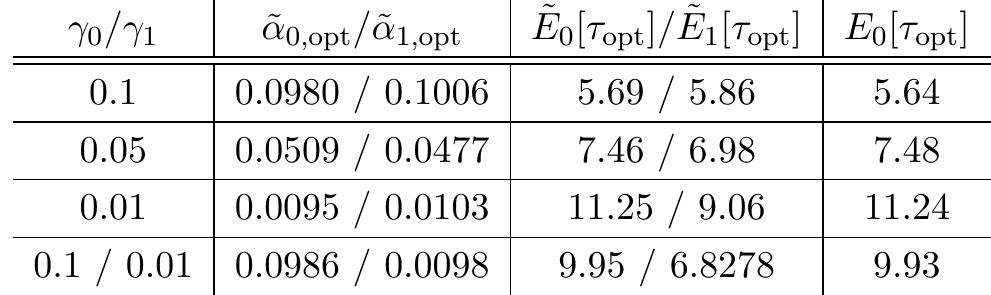}
  \caption{Gaussian AR(1) process: Empirical error probabilities $\tilde{\alpha}_i$ and average and expected run-length $\tau$ for target error
           probabilities $\gamma$}
  \label{tb:ar1_opt}
  \includegraphics{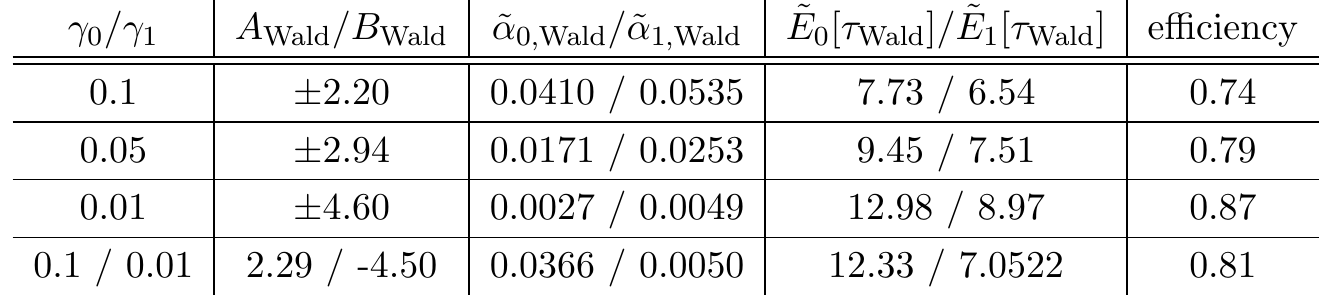}
  \caption{Gaussian AR(1) process: Log-likelihood ratio thresholds $A$, $B$, empirical error probabilities $\tilde{\alpha}_i$, and average
           run-length $\tau$ for target error probabilities $\gamma$ using Wald's approximations. The last column gives the relative loss
           in the average run-length compared to the optimal test, i.e., $\tilde{E}_0[\tau_{\text{opt}}]$/$\tilde{E}_0[\tau_{\text{Wald}}]$}
  \label{tb:ar1_wald}
\end{table}

The average run-length and the error probabilities of the optimal test and the one using Wald's approximations are given in Tables \ref{tb:ar1_opt}
and \ref{tb:ar1_wald}. A segment of the cost functions for $\gamma = 0.05$ is depicted in Figure \ref{fig:ar1_cost}. The intersection of the two
surfaces corresponds to the thresholds of the test. In Figure \ref{fig:ar1_thresholds}, the latter is shown together with the approximated constant
ones. Interestingly, the optimal thresholds are not uniformly tighter than the approximations. Instead, the additional degree of freedom is used to
loosen the thresholds for observations that are very unlikely under $P_0$ and tighten them in the critical region around the origin. Evidently, this
strategy is more efficient than uniformly tightening the thresholds. Another noteworthy fact is that in contrast to the lower threshold, the upper
threshold is far from being constant. This does not contradict the asymptotic optimality of the constant threshold test. It does, however,
indicate that there is no longer a stopping strategy that concurrently minimizes the run-lengths under both hypotheses, as is the case for i.i.d.\
observations \citep{Wald1948, Siegmund1985}. Minimizing the run-length under $\mathcal{H}_1$ yields a mirrored version of the thresholds in Figure
\ref{fig:ar1_thresholds}, with the lower threshold following the parabolic shape and vice versa.

A nice property of the optimal thresholds shown here is that they are relatively easy to approximate by polynomials or rational functions. In
practice, a few coefficients can therefore be sufficient to implement a nearly optimal strategy that combines the ease of the constant threshold test
with the efficiency of the optimal one.

\begin{figure}[!t]
  \centering
  \includegraphics{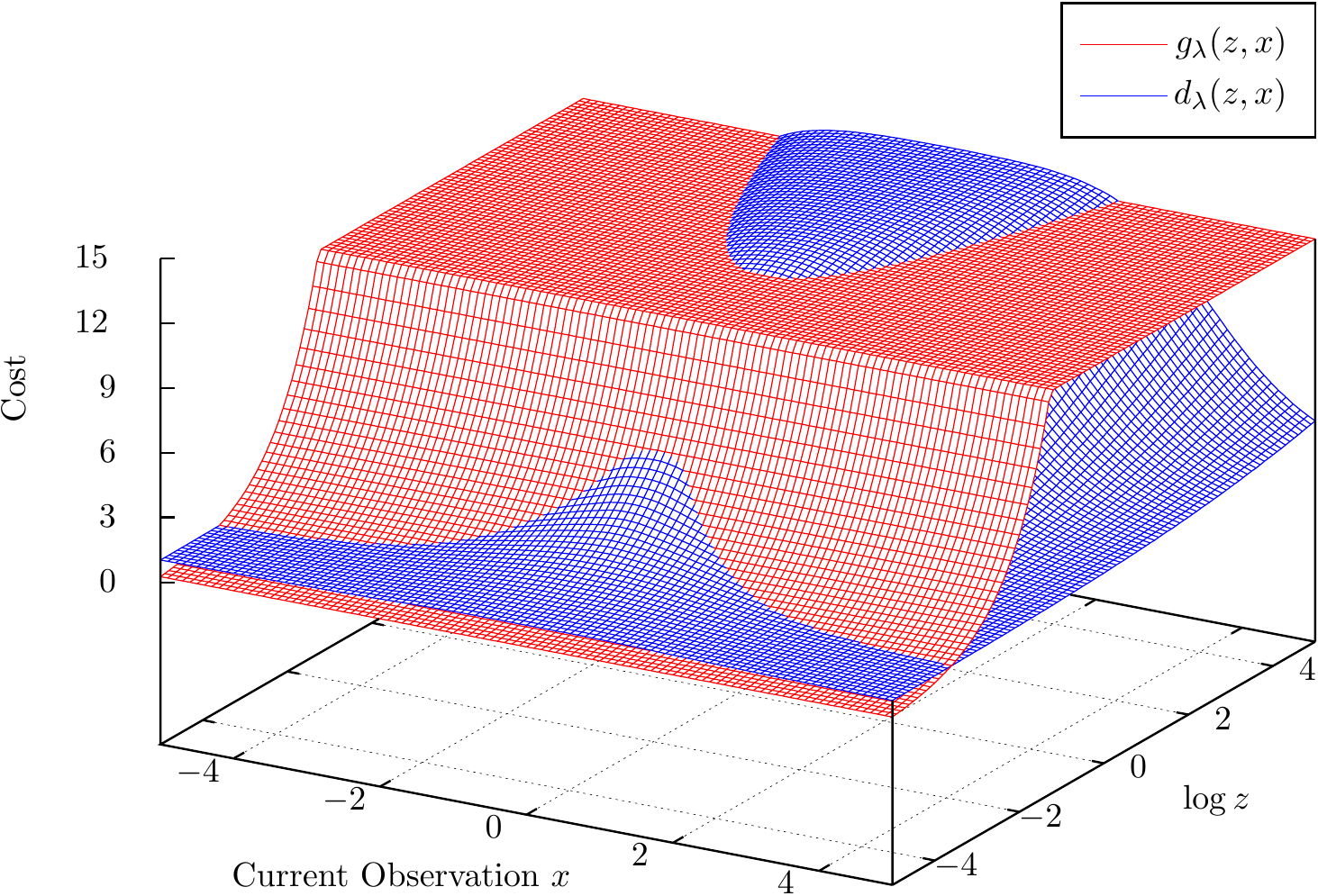}
  \caption{Gaussian AR(1) process: Segment of the cost functions for an optimal test with error probabilities $\gamma = 0.05$.}
  \label{fig:ar1_cost}
\end{figure}

\begin{figure}[!t]
  \centering
  \includegraphics{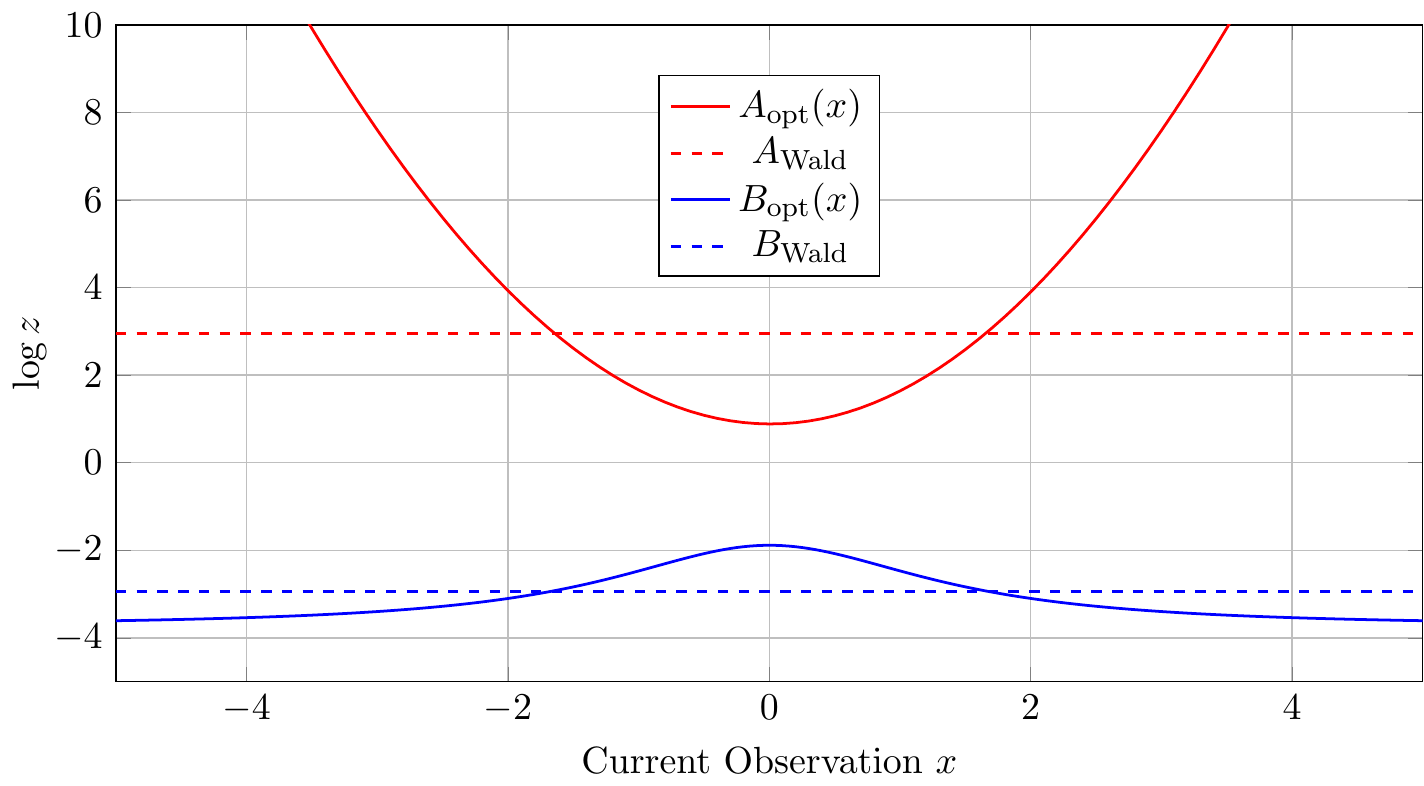}
  \caption{Gaussian AR(1) process: Optimal and approximated log-likelihood ratio thresholds as functions of the current observation $x_n$ for target
           error probabilities $\gamma = 0.05$.}
  \label{fig:ar1_thresholds}
\end{figure}

In addition to the results given in Table \ref{tb:ar1_opt}, an optimal test for the AR(1) model was designed with $\gamma = (0.0410, 0.0535)$, which
are the (empirical) error probabilities of the Wald test with target error probabilities $\gamma = 0.1$. The idea is to compare the strictly  optimal
test to the optimal constant threshold test. The expected run-length of the optimal test is 7.45, compared to 7.73 for the test with
constant thresholds. This corresponds to a reduction of about 3.6\%. Whether this improvement is worth the increased complexity surely depends on the
actual application. However, calculating the optimal constant thresholds is a non-trivial problem in itself so that the effort might as well be
invested in solving the problem exactly.

\appendix
\section{Proof of Theorem \ref{th:optimal_stopping}}
\label{apd:proof_optimal_stopping}
Theorem \ref{th:optimal_stopping} is a corollary of fundamental results in optimal stopping theory and variations of it have been proved repeatedly in
the literature \citep{Shiryaev1978,Novikov2009}. The proof presented here does not differ significantly, but is included for the sake of completeness
and to introduce concepts and notations that are used throughout the paper.

Under the usual assumption that decisions are not allowed to depend on future observations, the minimal cost $V^*$ of a stopping problem is given by
the limit
\begin{equation*}
  V^* = \lim_{N \to \infty} V_{0,N},
\end{equation*}
where for each $N \geq 1$ the sequence $(V_{n,N})_{n \geq 0}$ is recursively defined by
\begin{equation*}
  V_{n,N} = \min \left\{ c_{n,N} \,,\, E[V_{n+1,N}|\mathcal{F}_n] \right\}
\end{equation*}
with basis $V_{N,N} = c_{N,N}$---see \citet{Shiryaev1978, Peskir2006, Poor2009}. Here $N$ denotes the finite time horizon of the truncated stopping
rule\footnote{A stopping rule  is said to be truncated, if it is guaranteed to stop after at most $N$ time instances, i.e., if an integer $N \geq
1$ exists such that $P(\tau \leq N) = 1$.}, $c_{n,N}$ the cost for stopping at time $n$ given horizon $N$, and $V_{n,N}$ the cost for stopping at
the optimal time instant between $n$ and $N$.

For the sequential detection problem, $c_{n,N}$ is obtained from \eqref{eq:stop_prob} and is given by
\begin{equation*}
  c_{n,N} = n + g_{\lambda}(z^n)
\end{equation*}
for all $N \geq 1$. Assuming that the optimal cost function is of the form $V_{n,N} = n + \rho_{\lambda}^{n,N}(z^n,\theta_n)$ for some $n < N$,it can
be shown, via induction, that
\begin{align*}
  V_{n-1,N} & = \min \left\{ (n-1) + g_{\lambda}(z^{n-1}) \,,\, E[V_{n,N}|\mathcal{F}_{n-1}] \right\} \\
          & = \min \left\{ (n-1) + g_{\lambda}(z^{n-1}) \,,\, n + E[\rho_{\lambda}^{n,N}(z^{n},\theta_{n})|\mathcal{F}_{n-1}] \right\} \\
          & = (n-1) + \min \left\{ g_{\lambda}(z^{n-1}) \,,\, 1 + E[\rho_{\lambda}^{n,N}(z^{n},\theta_{n})|\mathcal{F}_{n-1}] \right\} \\
          & = (n-1) + \rho_{\lambda}^{n-1,N}(z^{n-1},\theta_{n-1}),
\end{align*}
where
\begin{equation*}
  E[\rho_{\lambda}^{n,N}(z^{n},\theta_{n})|\mathcal{F}_{n-1}] = \int \rho_{\lambda}^{n,N} \left(
    z_0^{n-1}\frac{f_{0,\theta_{n-1}}}{f_{\theta_{n-1}}}, z_1^{n-1} \frac{f_{1,\theta_{n-1}}}{f_{\theta_{n-1}}}, \xi_{\theta_{n-1}} \right) \,
    \dint F_{\theta_{n-1}}
\end{equation*}
is a function of $z^{n-1}$ and $\theta_{n-1}$. The induction basis is given by $V_{N,N} = c_{N,N} = N + g_{\lambda}(z^N)$ so that
$\rho_{\lambda}^{N,N} =  g_{\lambda}$. Hence, the minimum cost of the $N$-truncated test is $V_{0,N} = \rho_{\lambda}^{0,N}(z^0,\theta_0)
= \rho_{\lambda}^{0,N}(1,1,\theta_0)$ for all $N \geq 1$.

To determine the limit $\rho^n_{\lambda} = \lim_{N \to \infty} \rho_{\lambda}^{n,N}$, note that $\rho_{\lambda}^{n,N}$ is obtained from
$\rho_{\lambda}^{n+1,N}$ by applying the transformation
\begin{equation}
  T\left\{ \rho(z,\theta) \right\} = \min \left\{ g_{\lambda} \; , \; 1 + \int \rho \left(z_0 \frac{f_{0,\theta}}{f_{\theta}}, z_1
    \frac{f_{1,\theta}}{f_{\theta}}, \xi_{\theta} \right) \, \dint F_{\theta} \right\},
  \label{eq:def_T}
\end{equation}
which is monotonic in $\rho$. Since $\rho_{\lambda}^{N,N} = g_{\lambda}$, independent of $N$, $\rho_{\lambda}^{n,N}$ can be expressed as
\begin{align*}
  \rho_{\lambda}^{n,N} = T^{N-n}\left\{g_{\lambda}\right\},
\end{align*}
where $T^n$ denotes an $n$-times repeated application of $T$. The transition to the non-truncated case yields
\begin{align*}
  \lim_{N \to \infty} \rho_{\lambda}^{n,N} = \lim_{N \to \infty} T^{N-n}\left\{g_{\lambda}\right\} = \lim_{N \to
  \infty} T^N \left\{g_{\lambda}\right\} =: \rho_{\lambda}
\end{align*}
for all $n \in \mathbb{N}$. To show that $\lim_{N \to \infty} T^N \left\{g_{\lambda}\right\}$ exists and is unique, it suffices to show that $T^n
\left\{g_{\lambda}\right\} \geq 0$ for all $n \geq 1$ and that the sequence $T^n \left\{g_{\lambda}\right\}$ is monotonically nonincreasing
\citep{Rudin1987, Novikov2009}. The fact that $T^n \left\{g_{\lambda}\right\} \geq 0$  follows directly from $g_{\lambda} \geq 0$ and the definition
of $T$. The monotonic property can again be established by induction. Assume that $T^n \left\{g_{\lambda}\right\} \leq T^{n-1}
\left\{g_{\lambda}\right\}$. By monotonicity of $T$ it then holds that
\begin{align*}
  T^n \left\{g_{\lambda}\right\} = T \left\{T^{n-1}\{g_{\lambda}\}\right\} \leq T \left\{T^n\{g_{\lambda}\}\right\} = T^{n+1}
\left\{g_{\lambda}\right\}.
\end{align*}
The induction basis $T\left\{g_{\lambda}\right\} \leq g_{\lambda}$ holds trivially since $T\left\{\rho\right\} \leq g_{\lambda}$ by definition. The
fixed-point solution of
\begin{align*}
  \rho = T\{\rho\}
\end{align*}
yields \eqref{eq:stop_prob}. This concludes the proof.

\section{Proof of Lemma \ref{lm:uniform_convergence}}
\label{apd:proof_uniform_convergence}
Uniform convergence of monotonic sequences often follows immediately from Dini's theorem \citep{Rudin1987}. In this case, however, neither is the
state space $E$ compact, nor is $\rho_{\lambda}$ necessarily continuous in $\theta$. Nevertheless, uniform convergence can still be shown via a detour
over almost uniform convergence.

Define the measure
\begin{equation*}
  H^*(B) = \sup_{(z,\theta) \in E} H_{z,\theta}(B), \quad B \in \mathcal{E}.
\end{equation*}
By Theorem \ref{th:optimal_stopping}, $(\rho_{\lambda}^n)_{n\geq0}$ converges pointwise on $E$ and hence $H^*$ almost everywhere. Egorov's theorem
\citep{Beals2010} states that this implies almost uniform convergence with respect to $H^*$, i.e., for every $\varepsilon > 0$, there exists a set
$B_{\varepsilon} \in \mathcal{E}$ such that $H^*(B_{\varepsilon}) < \varepsilon$ and $(\rho_{\lambda}^n)_{n\geq0}$ converges uniformly on $E \setminus
B_{\varepsilon}$. In the following it is shown that for $\rho_{\lambda}$ almost uniform convergence implies uniform convergence.

Since $(\rho_{\lambda}^n)_{n\geq0}$ is monotonically nonincreasing, $\rho_{\lambda}^n$ can be written as $\rho_{\lambda}^n = \rho_{\lambda} + \Delta
\rho_{\lambda}^n$ for every $n\geq0$, where $(\Delta \rho_{\lambda}^n)_{n\geq0}$ is a nonincreasing sequence of nonnegative functions. To guarantee
uniform convergence it suffices to show that $\lim_{n \to \infty} \sup_{(z,\theta) \in E} \Delta \rho_{\lambda}^n = 0$. By definition of
$\rho_{\lambda}^n$ it holds that
\begin{align*}
  \sup_{(z,\theta) \in E} \Delta \rho_{\lambda}^n & = \sup_{(z,\theta) \in E} \left\{ \min \left\{ g_{\lambda}(z) \;,\; 1 + \int
    \rho_{\lambda}^{n-1} \, \dint H_{z,\theta} \right\} - \rho_{\lambda}(z,\theta) \right\} \\
  & \leq \sup_{(z,\theta) \in E} \left\{ \min \left\{ g_{\lambda}(z) \;,\; 1 + \int \rho_{\lambda} \, \dint H_{z,\theta} \right\} - \rho_{\lambda} +
    \int \Delta \rho_{\lambda}^{n-1} \, \dint H_{z,\theta} \right\} \\
  & = \sup_{(z,\theta) \in E} \left\{ \int \Delta \rho_{\lambda}^{n-1} \, \dint H_{z,\theta} \right\}.
\end{align*}
With $B_{\varepsilon}$ defined as above, it further follows that
\begin{align*}
  \sup_{(z,\theta) \in E} \Delta \rho_{\lambda}^n & \leq \sup_{(z,\theta) \in E} \left\{ \int \Delta \rho_{\lambda}^{n-1} \, \dint H_{z,\theta}
    \right\} \\
  & \leq \int_{E \setminus B_{\varepsilon}} \; \sup_{(z,\theta) \in E \setminus B_{\varepsilon}} \Delta \rho_{\lambda}^{n-1} \, \dint H^* +
    \int_{B_{\varepsilon}} \; \sup_{(z,\theta) \in B_{\varepsilon}} \Delta \rho_{\lambda}^{n-1} \, \dint H^* \\
  & < \sup_{(z,\theta) \in E \setminus B_{\varepsilon}} \Delta \rho_{\lambda}^{n-1} + \varepsilon \sup_{(z,\theta) \in E} \Delta
    \rho_{\lambda}^{n-1}.
\end{align*}
Since $\lim_{n \to \infty} \sup_{(z,\theta) \in E \setminus B_{\varepsilon}} \Delta \rho_{\lambda}^n = 0$, the sequence $(\sup_{(z,\theta) \in E}
\Delta \rho_{\lambda}^n)_{n\geq0}$ converges to zero for every \mbox{$\varepsilon < 1$}, given that $\sup_{(z,\theta) \in E} \Delta \rho_{\lambda}^n$
is
bounded for some $n$. The latter is guaranteed by the pointwise convergence of $\rho_{\lambda}^n$ on $E$.

\section{Proof of Theorem \ref{th:derivative}}
\label{apd:proof_derivative}
The proof of Theorem \ref{th:derivative} is given in two parts. First, it is assumed that $\rho_{\lambda}$ is differentiable almost everywhere and, in
particular, allows for interchanging the order of integration and differentiation. In the second part, these assumptions are refined and justified.

Assuming differentiability, taking the derivative with respect to $\lambda_i$ on both sides of \eqref{eq:Wald_Bellman_modified} yields
\begin{align*}
  \rho'_{\lambda_i}(z,\theta) = \begin{cases}
                                  \displaystyle g'_{\lambda_i}(z), & \text{for } (z,\theta) \in \mathcal{S}_{\lambda} \\[0.5em]
                                  \displaystyle \frac{\partial}{\partial \lambda_i} \int \rho_{\lambda} \, \dint H_{z,\theta}, & \text{for }
                                    (z,\theta) \in \overline{\mathcal{S}}_{\lambda}.
                                \end{cases}
\end{align*}
On $\partial \mathcal{S}_{\lambda}$ the derivative is not defined. Assuming, for now, that
\begin{equation}
  \frac{\partial}{\partial \lambda_i} \int \rho_{\lambda} \, \dint H_{z,\theta} = \int \rho'_{\lambda_i} \, \dint H_{z,\theta}
  \label{eq:dev_int_change}
\end{equation}
it follows that
\begin{align}
  \rho'_{\lambda_i}(z,\theta) & = g'_{\lambda}(z) \boldsymbol{1}_{\mathcal{S}_{\lambda}} +  \left( \int \rho'_{\lambda_i} \, \dint H_{z,\theta}
    \right) \boldsymbol{1}_{\overline{\mathcal{S}}_{\lambda}} \notag \\
  & = g'_{\lambda}(z) \boldsymbol{1}_{\mathcal{S}_{\lambda}} + \left( \int_{\mathcal{S}_{\lambda}} g'_{\lambda_i} \, \dint H_{z,\theta}
      + \int_{\overline{\mathcal{S}}_{\lambda}} \rho'_{\lambda_i} \, \dint H_{z,\theta} \right)
      \boldsymbol{1}_{\overline{\mathcal{S}}_{\lambda}}. \label{eq:derivative_intermediate1}
\end{align}
On $\mathcal{S}_{\lambda}$ the derivative of $g_{\lambda}$ exists everywhere, except on $\{z_0 \lambda_0 = z_1 \lambda_1\}$, which is a null set
by Assumption 3, so that
\begin{align*}
  g'_{\lambda_i} = z_i \boldsymbol{1}_{\mathcal{S}_{\lambda_i}}.
\end{align*}
Substituted into \eqref{eq:derivative_intermediate1} yields
\begin{align}
  \rho'_{\lambda_i}(z,\theta) & = z_i \boldsymbol{1}_{\mathcal{S}_{\lambda_i}} + \left(\int_{ \left\{ \left(z_0 \frac{f_{0,\theta}}{f_{\theta}}, z_1
    \frac{f_{1,\theta}}{f_{\theta}}, \xi_{\theta} \right) \in \mathcal{S}_{\lambda_i} \right\} } z_i \frac{f_{i,\theta}}{f_{\theta}} \,
    \dint F_{\theta} + \int_{\overline{\mathcal{S}}_{\lambda}} \rho'_{\lambda_i} \, \dint H_{z,\theta} \right)
    \boldsymbol{1}_{\overline{\mathcal{S}}_{\lambda}} \notag \\
  & = z_i \boldsymbol{1}_{\mathcal{S}_{\lambda_i}} + \left( z_i \int_{ \left\{ \left(z_0 \frac{f_{0,\theta}}{f_{\theta}}, z_1
    \frac{f_{1,\theta}}{f_{\theta}}, \xi_{\theta} \right) \in \mathcal{S}_{\lambda_i} \right\} } \, \dint F_{i,\theta} +
    \int_{\overline{\mathcal{S}}_{\lambda}} \rho'_{\lambda_i} \, \dint H_{z,\theta} \right) \boldsymbol{1}_{\overline{\mathcal{S}}_{\lambda}} \notag
    \\
  & = z_i \boldsymbol{1}_{\mathcal{S}_{\lambda_i}} + \left( z_i H^i_{z,\theta}(\mathcal{S}_{\lambda_i}) +
    \int_{\overline{\mathcal{S}}_{\lambda}} \rho'_{\lambda_i} \, \dint H_{z,\theta} \right) \boldsymbol{1}_{\overline{\mathcal{S}}_{\lambda}},
    \label{eq:fredholm_int}
\end{align}
which is the statement in Theorem \ref{th:derivative}. \pagebreak

At this point, it still needs to be shown that $\rho_{\lambda}$ is differentiable on $\overline{\mathcal{S}}$, that the order of integration and
differentiation in \eqref{eq:dev_int_change} can indeed be interchanged and that the integrals in \eqref{eq:fredholm_int} have a unique solution.

First, the question of differentiation under the integral is addressed. By the derivative lemma \citep{Bauer2001}, \eqref{eq:dev_int_change}
holds, if
\begin{enumerate}
  \item $\rho_{\lambda}$ is $H_{z,\theta}$ integrable for all $(z,\theta) \in E$ (already shown)
  \item $\rho_{\lambda}$ is differentiable almost everywhere
  \item an $H_{z,\theta}$ integrable function $\overline{r}$ that is independent of $\lambda$ exists with $\overline{r} \geq \lvert
        \rho'_{\lambda_i} \rvert$.
\end{enumerate}
Properties 2 and 3 can again be shown by induction. Consider the three sequences of functions $(R_{\lambda}^n)_{n\geq0}$ and
$(r_{\lambda_i}^n)_{n\geq0}$, $i=0,1$, with $R_{\lambda}^n: E \to \mathbb{R}_+$ recursively defined by
\begin{equation*}
  R_{\lambda}^n = g_{\lambda}(z) \boldsymbol{1}_{\mathcal{S}_{\lambda}} +  \left( \int R_{\lambda}^{n-1} \, \dint H_{z,\theta} \right)
    \boldsymbol{1}_{\overline{\mathcal{S}}_{\lambda}}
\end{equation*}
and $r^n_{\lambda_i}:  \overline{\mathcal{S}}_{\lambda} \to \mathbb{R}_+$ by
\begin{equation*}
  r_{\lambda_i}^n(z,\theta) = z_i H^i_{z,\theta}(\mathcal{S}_{\lambda_i}) + \int_{\overline{\mathcal{S}}_{\lambda}} r_{\lambda_i}^{n-1} \,
  \dint H_{z,\theta}.
\end{equation*}
The induction bases are given by $R_{\lambda}^0 = g_{\lambda}$ and $r_{\lambda_i}^0 = z_i$, respectively. Following the same line of arguments as in
the proofs of Theorem \ref{th:optimal_stopping} and Lemma \ref{lm:uniform_convergence}, it can be shown that $(R_{\lambda}^n)_{n\geq0}$ converges
monotonically and uniformly to a  unique, nonnegative function $R_{\lambda}$. Analogously, $(r^n_{\lambda_i})_{n\geq0}$ converges monotonically and
uniformly to the nonnegative function  $r_{\lambda_i}$. From the uniqueness of $\rho_{\lambda}$ and $R_{\lambda}$, it further follows that
\mbox{$R_{\lambda}  = \rho_{\lambda}$}.

Now, assume that $R_{\lambda}^n$ fulfills the differentiation lemma and that $r_{\lambda_i}^n(z,\theta)$ is its derivative with respect to
$\lambda_i$ on $\overline{S}_{\lambda}$. It then holds that on $\overline{S}_{\lambda}$,
\begin{align*}
  \frac{\partial}{\partial \lambda_i} R_{\lambda}^{n+1} = \int \frac{\partial}{\partial \lambda_i} R_{\lambda_i}^n \, \dint H_{z,\theta}
  = z_i H^i_{z,\theta}(\mathcal{S}_{\lambda_i}) + \int_{\overline{\mathcal{S}}_{\lambda}} r_{\lambda_i}^n \, \dint H_{z,\theta} = r_{\lambda_i}^{n+1},
\end{align*}
meaning that $\frac{\partial}{\partial \lambda_i} R_{\lambda}^{n+1}$ is well defined on $\overline{S}_{\lambda}$ and is upper bounded  by
$\overline{r} = r_{\lambda_i}^0 = z_i$. Consequently, $R_{\lambda}^{n+1}$ again fulfills the differentiation lemma. The fact that on
$\overline{\mathcal{S}}$ $\frac{\partial}{\partial  \lambda_i} R_{\lambda}^0 = \frac{\partial}{\partial \lambda_i} g_{\lambda} = z_i =
r_{\lambda_i}^0$ completes the induction.

In summary: The sequence $(R_{\lambda}^n)_{n\geq0}$ converges uniformly to $\rho_{\lambda}$ on $E$. The sequences of derivatives
$(r_{\lambda}^n)_{n\geq0}$ converge uniformly to $r_{\lambda_i}$ on $\overline{\mathcal{S}}_{\lambda}$. From this it follows \citep{Rudin1987} that
$\rho_{\lambda}$ is differentiable on $\overline{\mathcal{S}}_{\lambda}$ with $\rho'_{\lambda_i} = r_{\lambda_i} \leq \overline{r}_i = z_i$, which
justifies the use of the derivative lemma in  \eqref{eq:dev_int_change}.

\section{Proof of Theorem \ref{th:error_prob}}
\label{apd:proof_error_prob}
The error probabilities $\alpha_i$ are given by
\begin{equation*}
  \alpha_i = P_i[\phi_{\tau} = 1-i] = P_i[\phi_{\tau} = 1-i \,|\, z^0 = 1, \theta^0 = \theta_0 ].
\end{equation*}
Since the underpinning Markov process is time homogeneous and the optimal stopping rule time invariant, it further holds that for all $n,m \geq 0$
\begin{equation*}
  P_i[\phi_{\tau} = 1-i \,|\, z^n = z, \theta^n = \theta, \tau \geq n ] = P_i[\phi_{\tau} = 1-i \,|\, z^m = z, \theta^m = \theta, \tau \geq m ].
\end{equation*}
It is therefore possible to define the function
\begin{equation*}
  \alpha_i(z,\theta) = P_i[\phi_{\tau} = 1-i \,|\, z^n = z, \theta^n = \theta, \tau \geq n ],
\end{equation*}
which is independent of $n$. From the Chapman-Kolmogorov backward equations \citep{Peskir2006} and the definition of $H^i_{z,\theta}$ in
\eqref{eq:definition_H} it follows that
\begin{equation*}
  \alpha_i(z,\theta) = \int \alpha_i \, \dint H^i_{z,\theta}.
\end{equation*}
For $\psi = \psi_{\lambda}^*$ the stopping region is given by $\mathcal{S}_{\lambda}$. Choosing $\phi = \phi_{\lambda}^*$ further yields
\begin{equation*}
  \alpha_i^n(\mathcal{S}_{\lambda_i}) = 1 \quad \text{and} \quad \alpha_i^n(\mathcal{S}_{\lambda_{1-i}}) = 0
\end{equation*}
for all $n\geq0$. Hence,
\begin{equation}
  \alpha_i(z,\theta) = H^i_{z,\theta}(\mathcal{S}_{\lambda_i}) + \int_{\overline{\mathcal{S}}_{\lambda}} \alpha_i \, \dint H^i_{z,\theta}
  \label{eq:fredholm_int_errors}
\end{equation}
on $\overline{\mathcal{S}}_{\lambda}$. These are the well established Fredholm integral equations describing the error probabilities of
time homogeneous sequential tests \citep{Feller1971,Tartakovsky2014}. Note the difference in the integration measure compared to
\eqref{eq:derivative_lemma_eq}. Using the same techniques as before, it can be shown that \eqref{eq:fredholm_int_errors} has unique, positive
solutions on $\overline{\mathcal{S}}_{\lambda}$. What is left to show is that these solutions coincide with $\rho_{\lambda_i}'/z_i$.

From Theorem \ref{th:derivative} it is clear that on $\mathcal{S}_{\lambda_i}$
\begin{equation*}
  \frac{\rho_{\lambda_i}'}{z_i} = \frac{z_i}{z_i} = 1 = \alpha_i
\end{equation*}
and on $\mathcal{S}_{\lambda_{1-i}}$
\begin{equation*}
  \frac{\rho_{\lambda_i}'}{z_i} = 0 = \alpha_i.
\end{equation*}
On $\overline{\mathcal{S}}_{\lambda}$ it needs to be shown that $\rho_{\lambda_i}'/z_i$ solves \eqref{eq:fredholm_int_errors}, i.e.,
\begin{align*}
  \frac{\rho_{\lambda_i}'(z,\theta)}{z_i} & = H^i_{z,\theta}(\mathcal{S}_{\lambda_i}) + \int_{\overline{\mathcal{S}}_{\lambda}}
    \frac{\rho_{\lambda_i}'\left(z_0 \frac{f_{0,\theta}}{f_{\theta}}, z_1 \frac{f_{1,\theta}}{f_{\theta}}, \xi_{\theta} \right)}{z_i
    \frac{f_{i,\theta}}{f_{\theta}}} \, \dint F^i_{\theta} \\
  \frac{\rho_{\lambda_i}'(z,\theta)}{z_i} & = H^i_{z,\theta}(\mathcal{S}_{\lambda_i}) + \frac{1}{z_i} \int_{\overline{\mathcal{S}}_{\lambda}}
    \rho_{\lambda_i}'\left(z_0 \frac{f_{0,\theta}}{f_{\theta}}, z_1 \frac{f_{1,\theta}}{f_{\theta}}, \xi_{\theta} \right) \, \dint F_{\theta} \\
  \rho_{\lambda_i}'(z,\theta) & = z_i H^i_{z,\theta}(\mathcal{S}_{\lambda_i}) + \int_{\overline{\mathcal{S}}_{\lambda}}
    \rho_{\lambda_i}' \, \dint H_{z,\theta},
\end{align*}
which is true by Theorem \ref{th:derivative}.

\section{Proof of Theorem \ref{th:no_gap}}
\label{apd:proof_no_gap}
Assuming that $L_{\gamma}$ attains its maximum for some finite and positive $\lambda_{\gamma}^*$, and that its derivatives in this point exist, it
holds that
\begin{equation*}
  \frac{\partial}{\partial \lambda_i} L_{\gamma}(\lambda) \Big\vert_{\lambda = \lambda_{\gamma}^*} = 0, \quad i=0,1.
\end{equation*}
By Theorems \ref{th:derivative} and \ref{th:error_prob}
\begin{align}
  \frac{\partial}{\partial \lambda_i} L_{\gamma}(\lambda) & = \frac{\partial}{\partial \lambda_i} \rho_{\lambda}(1,1,\theta_0) -
      \frac{\partial}{\partial \lambda_i} (\lambda_0 \gamma_0 + \lambda_1 \gamma_1) \notag \\
  & = \rho'_{\lambda_i}(1,1,\theta_0) - \gamma_i \notag \\
  & = \alpha_i(\phi_{\lambda}^*,\psi_{\lambda}^*) - \gamma_i \label{eq:L_derivative}
\end{align}
so that for $\lambda = \lambda_{\gamma}^*$
\begin{equation*}
  \alpha_i(\phi_{\lambda_{\gamma}^*}^*,\psi_{\lambda_{\gamma}^*}^*) = \gamma_i.
\end{equation*}
The test therefore meets the target error probabilities exactly and is of minimum expected run-length by definition of $\rho_{\lambda}$. The dual
objective accordingly is
\begin{align*}
  L_{\gamma}(\lambda_{\gamma}^*) & = \rho_{\lambda_{\gamma}^*}(1,1,\theta_0) - \lambda_{\gamma,0}^* \gamma_0 - \lambda_{\gamma,1}^* \gamma_1 \\
  & = V_{\lambda_{\gamma}^*}(\psi_{\lambda_{\gamma}^*}^*) - \lambda_{\gamma,0}^* \gamma_0 - \lambda_{\gamma,1}^* \gamma_1 \\
  & = E[\tau(\psi_{\lambda_{\gamma}^*}^*)] + \lambda_{\gamma,0}^* \alpha_0(\phi_{\lambda_{\gamma}^*}^*,\psi_{\lambda^*}^*) + \lambda_{\gamma,1}^*
      \alpha_1(\phi_{\lambda_{\gamma}^*}^*,\psi_{\lambda_{\gamma}^*}^*) - \lambda_{\gamma,0}^* \gamma_0 - \lambda_{\gamma,1}^* \gamma_1 \\
  & = E[\tau(\psi_{\lambda_{\gamma}^*}^*)] = E[\tau(\psi_{\gamma}^*)].
\end{align*}
Finally, it needs to be shown that $\lambda_{\gamma}^*$ is indeed positive and finite. First, let some $\lambda_{\gamma,i}^* = 0$. This implies
$\rho_{\lambda} = 0$, which corresponds to a trivial test---see the remark at the end Section \ref{sec:properties}. Since in this case $\alpha_i = 1$,
it follows from
\eqref{eq:L_derivative} that
\begin{equation*}
  \frac{\partial}{\partial \lambda_{\gamma,i}^*} L_{\gamma}(\lambda_{\gamma,i}^*) = 1-\gamma_i > 0,
\end{equation*}
which contradicts the assumption that $\lambda_{\gamma}^*$ maximizes $L_{\gamma}$. Second, increasing some $\lambda_i$ indefinitely leads
to a test with $\alpha_i \to 0$ so that
\begin{equation*}
  \lim_{\lambda_i \to \infty} \frac{\partial}{\partial \lambda_i} L_{\gamma}(\lambda) = 0 - \gamma_i < 0.
\end{equation*}
Therefore, $L_{\gamma}(\lambda)$ is decreasing in the limit and, in turn, $\lambda_{\gamma}^*$ is bounded.

\section{Enforcing Equality in the Constraint of the Relaxed Linear Program}
\label{apd:enforce_equality}
If numerical problems arise in the solution of \eqref{eq:convex_problem_max}, such that the inequality constraint is not fulfilled with equality, a
regularization term can be added to the objective function, namely,
\begin{align}
   & \max_{\lambda > 0, \rho \in \mathcal{L}} \quad \rho(1,1,\theta_0) - \lambda_0 \gamma_0 - \lambda_1 \gamma_1 + c \int \rho \, \dint \eta
    \label{eq:augmented_problem} \\
  \text{s.t.} \quad & \rho(z,\theta) \leq \min \left\{ \lambda_0 z_0 \,,\, \lambda_1 z_1 \,,\, 1 + \int \rho \, \dint H_{z,\theta}
    \right\} \quad \forall (z,\theta) \in E \notag,
\end{align}
where $\eta$ is some strictly increasing measure on $(E,\mathcal{E})$ and $c$ is a small positive constant. In \eqref{eq:augmented_problem} $\rho$ is
explicitly maximized over the entire state space, whereas in \eqref{eq:convex_problem_max} this maximization resulted indirectly from maximizing
$\rho(1,1,\theta_0)$. Note that this regularization of the original problem is by no means the only way to combat numerical artifacts and is not
essential to this work. Nevertheless, it is straightforward and yields good results in practice.

Since $\rho$ is increasing in $\lambda$, $c$ has to be chosen small enough such that the problem is still bounded. To guarantee this, the additional
integral term can be upper bounded by
\begin{equation*}
  \int \rho \, \dint \eta < \int g \, \dint \eta < \lambda_0 \int z_0 \, \dint \eta + \lambda_0 \int z_1 \, \dint \eta = \lambda_0 + \lambda_1,
\end{equation*}
where, without loss of generality, it is assumed that $\eta$ has been chosen such that
\begin{equation*}
  \int z_i \, \dint \eta = 1, \quad i=0,1
\end{equation*}
The regularized objective function in \eqref{eq:augmented_problem} is then bounded by
\begin{equation*}
  \rho(1,1,\theta_0) - \lambda_0 (\gamma_0-c) - \lambda_1 (\gamma_1-c).
\end{equation*}
Consequently, choosing $c < \min\{\gamma_0,\gamma_1\}$ guarantees boundedness. Furthermore, this shows that the regularized problem corresponds to
the original problem with smaller target error probabilities, which means that the solution, even though not strictly optimal anymore, still satisfies
the original error requirements.

For the discretized problem, the additional integral term can be replaced by a weighted sum of all elements of the vector $\boldsymbol{\rho}$ and the
above considerations can be used to determine the constant $c$ so that the solution of the regularized problem is sufficiently close to the original
one.

\section*{ACKNOWLEDGEMENTS}

The authors would like to thank the anonymous reviewer and the editors for their time and effort.

This work was performed within the LOEWE Priority Program Cocoon (www.cocoon.tu-darmstadt.de) supported by the LOEWE research initiative
of the state of Hesse/Germany.



\end{document}